\title{Explicit formulas for the Bessel models: odd general spin groups}
\author{Yu Xin}
\address{Department of Mathematics, Bar-Ilan University, Ramat Gan, 5290002, Israel}
\email{yu.xin@biu.ac.il}
\date{}
\begin{document}

\begin{abstract}
Let $F$ be a non-archimedean local field of characteristic zero. In this work, we study the Bessel model for $\GSpin_{2n+1}$, extending a result of Bump, Friedberg and Furusawa. In particular, we obtain explicit formulas for the unramified Bessel functions. These formulas have a global application to a Rankin--Selberg integral of the $L$-function for $\GSpin_{2n+1} \times \GL_n$, generalizing a construction of Furusawa. We compute the local factor of the global integral at a good place. Moreover, a corollary of this computation finds an application in a recent work of Asgari, Cogdell and Shahidi, specifically in their unramified computation.
\end{abstract}
\maketitle
\tableofcontents

\section{Introduction}
Let $F$ be a local non-archimedean field of characteristic zero. In this work we study the Bessel model for unramified representations of $\GSpin_{2n+1}\times\GSpin_2$. We obtain a Casselman--Shalika type formula for the normalized unramified Bessel function, by extending the similar results of Bump, Friedberg and Furusawa \cite{BFF97} for the Bessel model of $\SO_{2n+1}\times\SO_2$. As an application, we extend the global Rankin--Selberg integral of Furusawa \cite[Appendix]{BFF97} from orthogonal groups to general spin groups. Furthermore, using the methods of Soudry \cite{Sou93} and Kaplan \cite{Kap12}, our results can be utilized to compute the local Rankin--Selberg integrals of \cite{ACS24} with unramified data, for $\GSpin_{2n}\times\GL_{k}$ when $k<n$.

Let $G=\GSpin_{2n+1}(F)$ and $T=\GSpin_2(F)$. We define a Bessel subgroup of $G$ as follows. Let $P$ be a parabolic subgroup of $G$, whose Levi part is isomorphic to $\GL_1(F)\times\ldots\times\GL_1(F)\times\GSpin_3(F)$ ($\GL_1(F)$ appears $n-1$ times). Let $U$ be the unipotent radical of $P$ and let $P=U\rtimes M$ be a Levi decomposition. We can define a character $\psi$ of $U$ such that $T$ is embedded in the stabilizer of $\psi$ inside $M$. The Bessel subgroup $R$ is given by $R=U\rtimes T$.

Let $\pi$ be an unramified principal series representation of $G$, and $\lambda$ be an unramified character of $T$ such that $\lambda$ and $\pi$ agree on the center $Z_G$ of $G$. A Bessel functional on $\pi$ is by definition a non-zero functional in the space
\begin{equation}\label{eq:bessel-model-intro}
\mathrm{Hom}_{R}(\pi,\psi\otimes\lambda).
\end{equation}
This space is known to be one-dimensional for $\pi$ ``in general position'', e.g., by results of Kaplan, Lau and Liu \cite{KLL23}. 
For a Bessel functional $B$ on $\pi$, the corresponding Bessel model of $\pi$ is the space of functions $\mathcal{B}_{\xi}:G\to\C$ given by $\mathcal{B}_{\xi}(g)=B(\pi(g)\xi)$, where $\xi$ is a vector varying in the space of $\pi$. Moreover, the space in \eqref{eq:bessel-model-intro} was recently proved to be at most one-dimensional for all irreducible representations $\pi$ by Yan \cite{Yan25} (using results of \cite{ET23}, and \cite{EKM24} for the archimedean case).

We can define $B$ using an integral, which is absolutely convergent when the real part of the inducing character of $\pi$ belongs to a certain region, and elsewhere $B$ is defined by meromorphic continuation. Let $K<G$ be a maximal compact open subgroup. 
Let $\mathcal{H}$ be the unique function in the Bessel model of $\pi$ such that $\mathcal{H}(K)=1$, i.e., the 
normalized unramified function. It is straightforward to show that the value of $\mathcal{H}$ is determined by its values on torus elements $\varpi^\delta$ in the split case and on a fixed shift of
torus elements $\tilde{\varpi}^\delta$ in the quasi-split non-split case, indexed by a dominant integral vector $\delta = (\ell_1,\ldots,\ell_n)$, i.e. for such vectors with $\ell_i$ integers and with $\ell_1 \geq \ell_2 \geq \cdots \geq \ell_n \geq 0$. For the detailed definitions of the notation, see Section \ref{sec:explicit-formula}.

Our main result is the following formula, which expresses $\mathcal{H}$ on these elements in terms of the Satake parameters of $\pi$, resembling the Casselman--Shalika formula \cite{CS80}. Let us denote $\alpha_i = \chi_i(\varpi)$ where $\chi = (\chi_0,\chi_1,\ldots,\chi_n)$ is the inducing character of $\pi$ and $\varpi$ is a fixed uniformizer of $F$. Moreover, when $T = \GSpin_2(F) = Z_{G}(F) \times \SO_2(F)$ is split, we denote $\beta = \lambda(1,\diag(\varpi,\varpi^{-1}))$. Let $\calA = \sum_{w \in W} (-1)^{\text{length}(w)} w$ be the alternator in the group algebra $\C[W]$, where $W$ is the Weyl group of $\GSpin_{2n+1}$, which acts on the space of the rational functions of the Satake parameters, and let $\Delta = \calA(\alpha_1^n \cdots \alpha_n)$. For convenience, we let the quadruple $(B_\delta;z_1,z_2;Q(q))$ be $(\calH(\varpi_\delta);\alpha_0^{1/2},-\alpha_0^{1/2};1+q^{-1})$ if $T$ is quasi-split non-split and be $(\calH(\tilde{\varpi}_\delta);\beta,\alpha_0\beta^{-1};1-q^{-1})$ if $T$ is split. 

\begin{theorem}[Theorem \ref{thm:formula-nonsplit} and \ref{thm:formula-split}]\label{thm:main}
The value of $\calH$ on $G$ is determined by the formula
\[
B_\delta = \frac{\delta_{B_G}(\varpi^\delta)}{Q(q)} \Delta^{-1} \calA\left(\prod_{i=1}^n \alpha_i^{\ell_i + i} (1 - z_1 \alpha_i^{-1} q^{-\frac{1}{2}})(1 - z_2 \alpha_i^{-1} q^{-\frac{1}{2}}) \right).
\]
Here $\delta_{B_G}$ is the modulus character of the fixed Borel subgroup of $G$.
\end{theorem}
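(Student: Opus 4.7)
The strategy is to adapt the argument of Bump--Friedberg--Furusawa \cite{BFF97} from $\SO_{2n+1}$ to $\GSpin_{2n+1}$, making the modifications needed to accommodate the similitude factor $\alpha_0$ and to separate the two cases for $T$. The overall shape of the proof is a Casselman--Shalika style computation: express $B_\delta$ as an explicit unipotent integral, use uniqueness of the Bessel model and the intertwining operators to force the Weyl alternator structure, and then identify the remaining polynomial ingredient by matching against rank-one base cases.

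First, using the Iwasawa decomposition and the integral expression for $B$ on $U$-invariant vectors in the induced model, I would unfold the computation of $B_\delta$ into an integral over the unipotent radical $U$ that depends on $\delta$. In the region where this integral converges absolutely it is a rational function in $\alpha_0,\ldots,\alpha_n$ (and, in the split case, in $\beta$), and elsewhere one takes the meromorphic continuation. This step fixes the denominator structure (the factor $Q(q)$ and the modulus character $\delta_{B_G}$) from the measure and the induction data.

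Second, I would exploit the uniqueness of the Bessel functional, established in \cite{KLL23,Yan25}, together with the standard intertwining operators attached to simple reflections $w \in W$. Each such intertwiner sends the spherical vector to a scalar multiple of the spherical vector on the twisted principal series, with a known Gindikin--Karpelevich $c$-function; uniqueness of the Bessel functional then implies that $B_\delta$ is antisymmetric under $W$ up to multiplication by the Weyl denominator $\Delta$. Equivalently, $\Delta \cdot B_\delta$ lies in the image of $\calA$, and the support condition on $\delta$ (the dominant cone) bounds the degrees of the monomials that can appear.

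Third, to pin down the specific polynomial inside the alternator, I would compute the integral directly in the rank-one situation ($\GSpin_3\times T$), where the answer is a product of local $L$-factors for $\GSpin_3\times T$. The factors $(1 - z_1\alpha_i^{-1}q^{-1/2})(1-z_2\alpha_i^{-1}q^{-1/2})$ in the theorem are precisely these $L$-factor contributions, while the monomial $\alpha_i^{\ell_i+i}$ is forced by the $\rho$-shift. Propagating the rank-one computation to arbitrary $n$ proceeds by induction, using a Levi-type recursion on $U$ and the symmetry established in step two.

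The principal obstacle is the bifurcation between split and quasi-split non-split $T$, which produces the different torus elements $\varpi^\delta$ vs.\ $\tilde{\varpi}^\delta$, the different parameters $(z_1,z_2)$, and the sign change in $Q(q)$ from $1-q^{-1}$ to $1+q^{-1}$. Tracking how the similitude parameter $\alpha_0$ threads through the integral via the central-character compatibility between $\pi$ and $\lambda$ is what genuinely distinguishes the computation from that of \cite{BFF97}. I would expect the most delicate step to be the non-split base case, since the Bessel element there is a fixed shift of the split torus and the unipotent integral cannot be read off by specialization from the orthogonal computation; carrying out that calculation carefully, and verifying that it matches the predicted factor $1+\alpha_0\alpha_i^{-2}q^{-1}$, is the main technical input on top of the general framework.
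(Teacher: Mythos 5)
Your proposal correctly identifies the overall framework --- the Casselman--Shalika method, the $W$-functional equations coming from intertwining operators, and a rank-one ($\GSpin_3\cong\GL_2$) input --- and in that sense it follows the same road as the paper. But there is a genuine gap at the decisive step. $W$-invariance of the normalized function only determines $B_\delta$ up to one unknown coefficient per $W$-orbit of monomials $\alpha_1^{\pm\ell_{s(1)}}\cdots\alpha_n^{\pm\ell_{s(n)}}$, and your step three, which is supposed to supply that coefficient for general $n$, is not substantiated: you propose ``induction, using a Levi-type recursion on $U$,'' but no such recursion is set up, and it is not clear one exists --- restricting a Bessel function on $\GSpin_{2n+1}$ to a smaller general spin group does not produce the Bessel function of the smaller group. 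What the paper actually does is non-inductive: it expands the Iwahori-fixed vector $F_\delta$ (resp.\ $P_\delta$) in the Casselman basis $\{f_w\}$, computes the coefficients $R(\delta,w;\chi)=c_w(\chi)\,\sigma_{^w\chi}(\varpi^\delta)$ via Casselman's theorem and the Macdonald formula for the inner $T_0$-integral, observes that only $w=w_0,w_1$ can contribute the leading monomial $\alpha_1^{-\ell_1}\cdots\alpha_n^{-\ell_n}$, and then evaluates the Bessel functional on those two basis vectors ($L(f_{w_0})=0$ and $L(f_{w_1})=1$ in the non-split case; an explicit computation of $B(f_{w_0})$ via the Bruhat--Iwahori geometry in the split case). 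This evaluation on the Casselman basis is the crux of the method and is entirely absent from your sketch; without it the $\rho$-shift $\alpha_i^{\ell_i+i}$ and the factors $(1-z_j\alpha_i^{-1}q^{-1/2})$ cannot be pinned down.

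Two further inaccuracies. First, your step two asserts that uniqueness plus intertwining operators make $\Delta\cdot B_\delta$ alternating; what the functional equations actually give (Theorems \ref{thm:FE-nonsplit} and \ref{thm:FE-split}) is $W$-invariance of $H_\chi$ divided by a $q$-dependent normalizing factor such as $\prod_{i<j}(1-\alpha_0^{-1}\alpha_i\alpha_jq^{-1})(1-\alpha_i\alpha_j^{-1}q^{-1})$ (with additional $\beta$-factors in the split case), which is not the Weyl denominator $\Delta$; conflating the two loses exactly the $q^{-1}$'s that appear in the final answer, and your assertion that the measure alone ``fixes'' $Q(q)$ and $\delta_{B_G}$ does not fill this in. Second, you have the two cases reversed at the end: the shifted representatives $\tilde{\varpi}^\delta=n(1)\varpi^\delta$ occur when $T$ is \emph{split}, not non-split, and the non-split factor is $1-\alpha_0\alpha_i^{-2}q^{-1}$, not $1+\alpha_0\alpha_i^{-2}q^{-1}$ (it is the product $(1-\alpha_0^{1/2}\alpha_i^{-1}q^{-1/2})(1+\alpha_0^{1/2}\alpha_i^{-1}q^{-1/2})$).
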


In order to obtain our formula we extend the work of Bump, Friedberg and Furusawa \cite{BFF97} who obtained the similar formula for orthogonal groups, i.e., when $G$ and $T$ are replaced with $\SO_{2n+1}(F)$ and $\SO_2(F)$, resp. Their computation was based on the Casselman--Shalika method (see \cite{Cas80,CS80}). We carry out the similar computation for general spin groups. The main difference is technical in nature: The rank of $T$ is higher than the rank of $\SO_2(F)$ by $1$, and the extra $\GL_1$ factor shows up in both the definition of the functional $B$ and the computations. 

We note that because of the exact sequence
\begin{align*}
1\to\GL_1\to\GSpin_m\to\SO_m\to1,
\end{align*}
the formula of \cite{BFF97} actually comes out as a special case of Theorem~\ref{thm:main}, so the present work can be considered a generalization of \cite{BFF97}. 

In the appendix of \cite{BFF97}, Furusawa defined a new Rankin--Selberg type integral, which represents the global partial $L$-function for automorphic cuspidal representations of $\SO_{2n+1} \times \GL_n$. The main novelty in this construction was that it did not require assuming that the cuspidal representation of the orthogonal group is globally generic (similarly general but distinct integrals were constructed in \cite{GPSR97}). The formula for the unramified Bessel function from \cite{BFF97} was used by Furusawa in order to compute the local unramified integrals arising as local components of this global integral. Subsequently, the local identities obtained by Furusawa were used by Kaplan \cite{Kap12} in order to compute the local Rankin--Selberg integrals for generic unramified representations of $\SO_{2n}\times\GL_k$ with $k<n$.

In this work we extend the global integral of Furusawa to $\GSpin_{2n+1}\times\GL_n$. The computations readily extend to general spin groups, and we also obtain, using Theorem~\ref{thm:main}, similar identities for the local unramified integrals. Recently, Asgari, Cogdell and Shahidi \cite{ACS24} constructed a Rankin--Selberg integral for globally generic automorphic representation of $\GSpin_m \times \GL_k$ for all $m$ and $k$, where they expanded the global zeta integral as an Euler product and completed the ``unramified computation'' which relates the local zeta integrals to local Langlands $L$-functions in the unramified case. Our results can be particularly used in order to complete the computation in \cite{ACS24} of the local Rankin--Selberg integrals for generic unramified representations of $\GSpin_{2n}\times\GL_k$ with $k<n$.

The Bessel model has been defined and studied for any pair of groups $\SO_m\times\SO_l$ where $m+l$ is odd, and also for hermitian groups \cite{GGP12}. The uniqueness of local Bessel models for orthogonal groups was proved by \cite{AGRS}. The extension of this model to general spin groups $\GSpin_m\times\GSpin_l$ was recently described by Yan \cite{Yan25}, who proved the uniqueness results in this setup. 
See also \cite{Sou17} for an application of Bessel models to integral representations.

As mentioned above, in order to prove our main theorem we use the Casselman--Shalika method \cite{Cas80,CS80}, following and extending the arguments of \cite{BFF97}. The recent theory of Sakellaridis \cite{Sak13} placed this method in a broader context and provided a geometric point of view.

\subsection*{Acknowledgement}
The author is deeply grateful to Eyal Kaplan for suggesting this project and for his constant support. 
The author also wishes to thank Mahdi Asgari for valuable discussions and for sharing some computations, 
and Pan Yan for sharing a manuscript of ongoing work.

This research was supported by the Israel Science Foundation (grant No. 376/21) through the author’s postdoctoral fellowship at the Department of Mathematics, Bar-Ilan University.

\section{Preliminaries}\label{sec:preliminaries}
Let $k$ be a field of characteristic zero. In this section, we recall some preliminaries on the connected split reductive group $\GSpin_{2n+1}$ over $k$.

\subsection{The root datum of $\GSpin_{2n+1}$}\label{subsec:root-datum}
There are several constructions one could give for the general spin groups. One construction is via the introduction of a based root datum for each group as in \cite[\S 7.4.1]{Spr98}, which we do below.

Let $n \geq 1$. According to \cite[\S 4]{HS16}, the based root datum of the connected split reductive group $\GSpin_{2n+1}$ is given by $(X,R,\Delta,X^\vee,R^\vee,\Delta^\vee)$, where $X$ and $X^\vee$ are $\Z$-modules generated by generators $e_0,e_1,\ldots,e_n$ and $e_0^*,e_1^*,\ldots,e_n^*$, respectively. 
The roots and coroots are given by
\begin{alignat*}{3}
    R      & = & R_{2n+1}      & = \{\pm (e_i \pm e_j): 1 \leq i < j \leq n\} \cup \{\pm e_i : 1 \leq i \leq n\} \\
    R^\vee & = & R^\vee_{2n+1} & = \{\pm (e_i^* - e_j^*): 1 \leq i < j \leq n\} 
                             \cup \{\pm (e_i^* + e_j^* - e_0^*): 1 \leq i < j \leq n\} \\ 
           &   &               & \quad \cup \{\pm(2e_i^*-e_0^*): 1 \leq i \leq n\}
\end{alignat*}
along with a bijection $R \rightarrow R^\vee$ defined by
\begin{align*}
    (\pm (e_i - e_j))^\vee &= \pm (e_i^* - e_j^*)\\
    (\pm (e_i + e_j))^\vee &= \pm (e_i^* + e_j^* - e_0^*)\\
    (\pm e_i)^\vee &= \pm (2e_i^* - e_0^*).
\end{align*}
Moreover, we fix the following choice of simple roots and coroots:
\begin{align*}
\Delta &= \{e_1-e_2, e_2-e_3, \ldots, e_{n-1}-e_n, e_n\},\\
\Delta^\vee &= \{e_1^*-e_2^*, e_2^*-e_3^*, \ldots, e_{n-1}^*-e_n^*, 2e_n^*-e_0^*\}.
\end{align*}
The based root datum determines, up to isomorphism, the group $\GSpin_{2n+1}$ together with a Borel subgroup $\borel$ and a split maximal torus $\torus \subset \borel$.

For $m \geq 1$, let $J_m$ be the antidiagonal matrix with all entries $1$.
Let $\SO_{2n+1} = \SO(J_{2n+1})$, equipped with a choice of the standard upper triangular Borel subgroup $\borel^\prime$ and the diagonal torus $\torus^\prime$. There is a projection
\begin{equation}\label{equation:projection}
\proj: \GSpin_{2n+1} \rightarrow \SO_{2n+1},
\end{equation}
whose kernel lies in the center $\mathbf{Z}$ of $\GSpin_{2n+1}$ and is isomorphic to $\GL_1$. Moreover, the projection induces isomorphisms on unipotent varieties. We can further assume the projection $\proj$ preserves the choice of Borel subgroups and split maximal torus, i.e. $$\borel = \proj^{-1}(\borel^\prime) \text{ and } \torus = \proj^{-1}(\torus^\prime).$$ 
We may, without loss of generality, represent an element 
$a$ of the maximal torus $\torus$ in the form
\[
a = e_0^*(a_0)e_1^*(a_1)\cdots e_n^*(a_n),
\]
where $a_i \in \GL_1$, and
\[
\proj(a) = \diag(a_1,\ldots,a_n,1,a_n^{-1},\ldots,a_1^{-1}).
\]

\subsection{The Weyl group of $\GSpin_{2n+1}$}\label{subsec:weyl-group}
According to \cite[Lemma 6.2.1]{HS16}, the Weyl group of $\GSpin_{2n+1}$ is canonically isomorphic to that of $\SO_{2n+1}$. We denote both Weyl groups by $W = W_{2n+1}$.

The Weyl group $W$ is canonically isomorphic to $\mathfrak{S}_n \rtimes \{\pm 1\}^n$. 
The action of $W$ on the maximal split torus $\torus$ of $\GSpin_{2n+1}$ is given by the action on the cocharacter lattice
 \begin{align*}
     (p,\epsund)\cdot e_i^* 
     &= \begin{cases}
         e_{p(i)}^* & i>0,\epsilon_{p(i)} = 1,\\
         e_0^*-e_{p(i)}^* & i>0,\epsilon_{p(i)} = -1,\\
         e_0^* & i = 0.
     \end{cases}
 \end{align*}
 
Every element $w$ of the Weyl group $W$ is represented by a permutation in $\mathfrak{S}_{2n}$, which we denote by $w$ again, such that
\[
w(i) + w(2n+2-i) = 2n+2
\]
for all $1 \leq i \leq n$.
For a more detailed discussion, we refer the reader to \cite[\S 13.2]{HS16}. 

\subsection{An accidental isomorphism}\label{subsec:acc-iso}
We let the group $\GL_2$ of invertible $2\times2$ matrices be equipped with the standard upper triangular Borel subgroup $B_{\GL_2}$ and the diagonal torus $A_{\GL_2}$. As is well known, there exists a $k$-isomorphism $\GSpin_3 \cong \GL_2$ of algebraic groups, which identifies the fixed Borel subgroup and maximal torus.

We write the based root datum of $\GL_2$ as
\(
(Y,Q,\Sigma;Y^\vee,Q^\vee,\Sigma^\vee),
\)
where
\[
Y = \Z f_1 \oplus \Z f_2, \quad 
Y^\vee = \Z f_1^* \oplus \Z f_2^*,
\]
\[
Q = \{\pm(f_1-f_2)\}, \quad 
Q^\vee = \{\pm(f_1^*-f_2^*)\},
\]
\[
\Sigma = \{f_1-f_2\}, \quad 
\Sigma^\vee = \{f_1^*-f_2^*\},
\]
together with the natural pairing on $Y \times Y^\vee$ and the bijection between roots and coroots. A torus element $\diag(t_1,t_2)\in \GL_2$ is identified with $f_1^*(t_1)f_2^*(t_2)$.

Let $X$ be as in \ref{subsec:root-datum} when $n =1 $. The linear map $X \rightarrow Y$ given by
\[
e_1 \mapsto f_1 - f_2, \quad e_0 \mapsto f_2
\]
is an isomorphism of the root data of $\GSpin_3$ and $\GL_2$. We denote the induced group isomorphism by $\iota$. In particular, $\iota$ identifies $e_1^*(t)$ with $\diag(t,1)$.

\section{The Bessel Functional}\label{sec:bessel-functional}

Let $F$ be a non-archimedean local field of characteristic zero. Let $\calO$ denote the ring of integers of $F$, $\mathfrak{p}$ denote the maximal ideal of $\calO$, $\varpi$ denote a uniformizer in $\calO$, $q$ denote the cardinality of the residue field $\calO/\varpi \calO$, and $|\cdot|_F$ denote the absolute value on $F$, normalized so that $|\varpi|_F = q^{-1}$.

Let $n \geq 1$. We adopt the notation in Section~\ref{sec:preliminaries}, taking $k = F$. Moreover, throughout Section~\ref{sec:bessel-functional}--\ref{sec:proof-mainthm}, we adopt the following convention. Let $\G = \GSpin_{2n+1}$ be defined over $F$ and let $\G^\prime = \SO_{2n+1}$ be defined over $F$ with respect to the symmetric matrix $J_{2n+1}$. If a subgroup $\H$ is defined for $\G$, we write $\H^\prime = \proj(\H)$ for its image in $\G^\prime$.
Let $\P = \M \ltimes \U$ be the standard parabolic subgroup of $\G$ where $\M \cong \GL_1^{n-1} \times \GSpin_{3}$. We note that the unipotent subgroup $\U$ is isomorphic to its image $\U^\prime$ in $\G^\prime$ via the projection $\proj$. 
In general, if $\mathbf{X}$ is an algebraic group over $F$, we write $X = \mathbf{X}(F)$ for its $F$-points. 
We use the same notation $\proj$ for the induced map of the projection \eqref{equation:projection} on $F$-points.
Finally, note that $\G$ (resp. $\G^\prime$) admits a model over $\calO$, where we denote the model by $\G$ (resp. $\G^\prime$) again.

For $S= (a,b,c)$ where $2ac+b^2 \neq 0$, we define a character $\theta_S$ on $U$ as
\[
\theta_S(u) = \psi(u_{12} + u_{23} + \ldots u_{n-2,n-1} + au_{n-1,n} + bu_{n-1,n+1} + cu_{n-1,n+2}),
\]
where $u \in U$ and $u_{ij}$ is the $(i,j)$-th entry of $\proj(u)$.
Let $\T$ be the connected component of the identity of the stabilizer, i.e.
\begin{align*}
\T := (\M^{\theta_S})^\circ &\cong \GSpin_2^\alpha
\end{align*}
where $\alpha = 2ac + b^2 \in F^\times.$ 
Here the group $\GSpin_{2}^\alpha$ is the split group $\GL_1 \times \GL_1$ if $\alpha \in F^2$, and the quasi-split non-split group $\mathrm{Res}^E_F \GL_1$ if $\alpha \in F \setminus F^2$, where $E = F(\sqrt{\alpha})$. For convenience, we will simply say $T$ is a non-split torus when it is quasi-split non-split. Indeed, a non-split form of $\GSpin_2$ is always quasi-split. (For a comprehensive introduction of $\GSpin_{2n}$ for any $n$, we refer the reader to \cite[Chapter 4]{HS16}, \cite[\S 2.2]{ACS24} and \cite{Bourbaki}.)
The \emph{Bessel subgroup} of $\GSpin_{2n+1}$ is defined to be
\(
\mathbf{R} := \T \ltimes \U.
\)
Let $\lambda$ be a character of $T$. The character $\theta_S$ is $T$-conjugation invariant and so can be extended to a generic character
\[
\theta_S: T \ltimes U \rightarrow \C^\times
\]
by letting $\theta_S(tu) = \lambda(t) \theta_S(u)$ for $t \in T$ and $u \in U$.

For $(\pi,V_\pi)$ an admissible representation of $G$, a \emph{Bessel functional} $B$ is a functional on $V_\pi$ such that
\[
B(\pi(tu) v) = \theta_S(tu) B(v) 
\]
for all $t \in T,u \in U$ and $v \in V_\pi$. The Bessel functional is unique up to scalar for an unramified principal series representation in ``general position'', this follows immediately from  \cite[Theorem 3.3]{KLL23} using transitivity of induction.
It is worth noting that the general notation for the Bessel subgroup and the Bessel functional (to be defined later) is described and studied by Yan \cite{Yan25}.

Let $\chi = (\chi_0, \chi_1, \ldots , \chi_n)$ be an unramified character on $A$. We consider the unramified principal series representation
\[
\pi = \ind(\chi).
\]
In our notation, the space $V_\pi$ consists of locally constant functions $\Psi$ on $G$ which satisfy
\[
\Psi(ang) = \delta_B^{\frac{1}{2}}(a)\left(\prod_{i=0}^n \chi_i(a_i) \right) \cdot \Psi(g)
\]
for $a= e_0^*(a_0) e_1^*(a_1) \cdots e_n^*(a_n) \in A, n \in N$ and $g \in G$, where $a_i \in \GL_1(F)$. Here $\delta_B$ is the modulus character of the Borel subgroup $B$ of $G$:
\begin{equation*}
\delta_B\left(\prod_{i=0}^n e_i^*(a_i)\right) = \prod_{i=1}^n |a_i|_F^{2n-2i+1}.
\end{equation*}
The group action is by right translation.

Suppose first that $T$ is a non-split torus. If $\chi_0 = \lambda|_Z$, we define a Bessel functional on the unramified principal series as follows.
For $\Psi \in V_\pi$, we define
\begin{equation}\label{eq:bessel-nonsplit}
B(\Psi) = \int_{T_0}\int_U \Psi(w_1ut) \theta_S^{-1}(u) du dt,
\end{equation}
where $w_1 = (1,2n+1)(2,2n)\cdots (n-1,n+3)$, $T_0 = T \cap K$ and $K = \G(\calO)$.
The absolute convergence of the integral is guaranteed if the characters $\chi_i$ are in a suitable region. We parametrize the unramified characters $\chi_i$ by $\alpha_i = \chi_i(\varpi)$ for all $i$. Indeed, since $T_0$ is compact, comparing with the standard intertwining operators $T_{w_1}$, one finds that if
\begin{equation}\label{domainC_0}
|\alpha_1| < \cdots |\alpha_{n-1}| < \min(|\alpha_n|,|\alpha_0\alpha_n^{-1}|),
\end{equation}
then the integral \eqref{eq:bessel-nonsplit} is absolutely convergent.

Suppose that $T$ is instead split. Without loss of generality, we let $S = (0,1,0)$ so that
\[
T = \{e_0^*(a_0)e_n^*(a_n)| a_0, a_n \in \GL_1\}.
\]
A character $\lambda$ of $T$ is given by a pair of multiplicative characters $\chi_0$ and $\lambda_0$ as follows:
\[
\lambda(e_0^*(a_0)e_n^*(a_n)) = \chi_0(a_0) \lambda_0(a_n).
\]
Assuming that $\lambda_0$ is unramified, a Bessel functional is given as follows.
For $\Psi \in V_\pi$, we define
\begin{equation}\label{eq:bessel-split}
B(\Psi) = \int_{Z \mo T}\int_U \Psi(w_0n(1)ut) \theta_S^{-1}(tu) du dt,
\end{equation}
where $w_0 = w_1(n,n+2)$ is a representative of the long Weyl group element and
\begin{equation}\label{eq:nx-matrix}
n(x) = \begin{pmatrix}
    I_{n-1} &&&&\\
    &1 & x & -\frac{x^2}{2}&\\
    && 1 & -x&\\
    && & 1&\\
    &&&&I_{n-1}
\end{pmatrix}.
\end{equation}
Here the Haar measure on $F^\times$ is normalized so that the measure $\calO^\times$ is $1$. Let us write $\beta = \lambda_0(\varpi)$. As we shall show below, the absolute convergence is given when
\begin{equation}\label{domainC_0prime}
|\alpha_1| < \cdots |\alpha_{n-1}| < \min(|\alpha_n|,|\alpha_0\alpha_n^{-1}|), \quad |\alpha_n| < q^{-\frac{1}{2}} \min(|\beta|,|\alpha_0\beta^{-1}|).
\end{equation}

Our first pair of results concerns the functional equations satisfied by the Bessel functional $B$. 
The Weyl group $W$ of $G$ acts on the characters $\chi = (\chi_0,\ldots,\chi_n)$, or equivalently, on the parameters $\alpha_0:=\chi_0(\varpi),\ldots,\alpha_n:=\chi_n(\varpi)$ as described in Section \ref{subsec:weyl-group}. In terms of these parameters, $W$ is the group of transformations of $(\alpha_0,\ldots,\alpha_n) \in (\C^\times)^n$ generated by
\[
(\alpha_0,\alpha_1,\ldots,\alpha_n) \mapsto (\alpha_0,\alpha_1,\ldots,\alpha_{n-1},\alpha_0\alpha_n^{-1})
\]
and by the action of the symmetric group $\mathfrak{S}_n$ on the latter $n$ variables of $(\alpha_0,\alpha_1,\ldots,\alpha_n)$.

Let $\Phi_\chi \in V_\pi$ be the standard unramified vector, which is the unique function in $V_\pi$ taking value one on $K$. 
Define the function $H_\chi$ on $G$ by
\[
H_\chi(g) = B(\pi(g)\Phi_\chi).
\]
This function is analogous to the unramified Whittaker function obtained from the standard Whittaker functional.

\begin{theorem}\label{thm:FE-nonsplit}
Suppose that $T$ is non-split. Then the function $H_\chi$, originally defined as an integral when
\[
|\alpha_1|<\ldots<|\alpha_{n-1}|< \min (|\alpha_n|,|\alpha_0\alpha_n^{-1}|),
\]
has a meromorphic continuation to all nonzero complex $\alpha_1,\ldots,\alpha_n$. Moreover the function
\[
\calH_\chi(g) = \frac{1}{\prod_{1\leq i < j \leq n}(1-\alpha_0^{-1} \alpha_i \alpha_j q^{-1})(1-\alpha_i \alpha_j^{-1} q^{-1})} H_\chi(g)
\]
is invariant under the action of $W$ on the $\alpha_i$, and holomorphic for $(\alpha_0,\alpha_1,\ldots,\alpha_n)\in (\C^\times)^n$.
\end{theorem}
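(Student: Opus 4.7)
The plan is to follow the Casselman--Shalika method as developed in \cite{BFF97}, establishing a functional equation for $H_\chi$ under each simple reflection of $W$. These functional equations would yield the meromorphic continuation in $(\alpha_1,\ldots,\alpha_n) \in (\C^\times)^n$ (with $\alpha_0$ determined by $\chi_0 = \lambda|_Z$), force the shape of the normalizing denominator, and give the $W$-invariance and holomorphy of $\calH_\chi$.

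For each simple reflection $s_i \in W$, $1 \leq i \leq n$, I would consider the standard intertwining operator $M_{s_i}\colon \ind(\chi) \to \ind(s_i\chi)$. The Gindikin--Karpelevich formula gives $M_{s_i}\Phi_\chi = c_{s_i}(\chi)\,\Phi_{s_i\chi}$ with $c_{s_i}(\chi)$ an explicit ratio of local factors attached to the positive root of $s_i$. The key computation is to evaluate $B \circ M_{s_i}$ and express it as a scalar $k_{s_i}(\chi)$ times the Bessel functional attached to the twisted character $s_i\chi$. Combining these identities would yield a functional equation
\[
H_{s_i\chi}(g) = \frac{k_{s_i}(\chi)}{c_{s_i}(\chi)}\, H_\chi(g),
\]
which, iterated along a reduced word, propagates the defining integral from the convergence region \eqref{domainC_0} to all of $(\C^\times)^n$.

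For the reflections $s_i$ with $1 \leq i \leq n-1$, which transpose $\alpha_i$ and $\alpha_{i+1}$, the action lies in the permutation part of $W$ and does not touch the $\GSpin_3$-block supporting the Bessel functional. A Bruhat decomposition argument, mirroring \cite[\S 1]{BFF97}, should show that $k_{s_i}(\chi)/c_{s_i}(\chi)$ is exactly the ratio contributing the pair of factors $(1-\alpha_i\alpha_{i+1}^{-1}q^{-1})$ and $(1-\alpha_0^{-1}\alpha_i\alpha_{i+1}q^{-1})$ to the normalization. The only new ingredient compared to \cite{BFF97} is the bookkeeping of the extra parameter $\alpha_0$ coming from the $\GL_1$ center of $\GSpin_{2n+1}$.

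The main obstacle is the functional equation for $s_n$, which acts by $\alpha_n \mapsto \alpha_0\alpha_n^{-1}$ and directly interacts with the Bessel block. Here the non-split structure $T \cong \mathrm{Res}^E_F \GL_1$ is essential: via the accidental isomorphism $\GSpin_3 \cong \GL_2$ from Section \ref{subsec:acc-iso}, the computation reduces to an integral on $\GL_2$ with $T$ the non-split torus associated with $E/F$. Compactness of $T_0$ combined with a change of variables in the unipotent integration should allow the $t$-integral to absorb the intertwining operator along the short root $e_n$, producing only a $W$-invariant scalar; this is precisely why the normalization denominator of $\calH_\chi$ contains no short-root factor.

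With all functional equations in hand, the product $\prod_{1 \leq i<j\leq n}(1 - \alpha_0^{-1}\alpha_i\alpha_j q^{-1})(1 - \alpha_i\alpha_j^{-1}q^{-1})$ is forced as the unique (up to a $W$-invariant factor) denominator whose $W$-translates cancel the scalars $k_{s_i}(\chi)/c_{s_i}(\chi)$, giving $W$-invariance of $\calH_\chi$. Holomorphy on $(\C^\times)^n$ then follows by the standard cancellation argument: the only potential poles of $H_\chi$ come from the intertwining singularities $(1-\alpha^\vee(\chi))^{-1}$, and applying the functional equation for the corresponding simple reflection shows these are matched by the numerator factors $(1-\alpha^\vee q^{-1})$ in $\calH_\chi$.
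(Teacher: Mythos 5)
Your proposal follows essentially the same route as the paper, which simply invokes the rank-one functional-equation argument of \cite[\S 4]{BFF97} and highlights exactly the point you identify: the reflection in the short root is handled via the accidental isomorphism $\GSpin_3 \cong \GL_2$, reducing to a non-split Waldspurger functional for $\ind(\chi_n,\chi_0\chi_n^{-1})$ over the compact torus $T_0$, which is why no short-root factors appear in the normalization and why $\alpha_0$ enters the denominators. One bookkeeping correction: the functional equation for a transposition $s_i$ ($i<n$) exchanges only the single factor $(1-\alpha_i\alpha_{i+1}^{-1}q^{-1})$ with $(1-\alpha_{i+1}\alpha_i^{-1}q^{-1})$, since $(1-\alpha_0^{-1}\alpha_i\alpha_{i+1}q^{-1})$ is $s_i$-invariant; the plus-root factors are forced into the denominator by the reflection $\alpha_n\mapsto\alpha_0\alpha_n^{-1}$, which interchanges them with the minus-root factors involving the index $n$.
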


\begin{theorem}\label{thm:FE-split}
Suppose $T$ is split. Then the function $H_\chi$, originally defined as an integral when
\[
|\alpha_1|<\ldots<|\alpha_{n-1}|< \min (|\alpha_n|,|\alpha_0\alpha_n^{-1}|), \quad |\alpha_n| < q^{\frac{1}{2}} \min(|\beta|,|\alpha_0\beta^{-1}|),
\]
has a meromorphic continuation to all nonzero complex $\alpha_1,\ldots,\alpha_n,\beta$. Moreover the function
\begin{equation}\label{equation:FEsplit}
\calH_\chi(g) = \frac{\prod_{i=1}^n(1-\alpha_i \beta^{-1} q^{-\frac{1}{2}})(1-\alpha_0^{-1}\alpha_i \beta q^{-\frac{1}{2}})}{\prod_{1\leq i < j \leq n}(1- \alpha_0^{-1} \alpha_i \alpha_j q^{-1})(1-\alpha_i \alpha_j^{-1} q^{-1})\prod_{i=1}^n(1-\alpha_0^{-1}\alpha_i^2 q^{-1})} H_\chi(g)
\end{equation}
is invariant under the action of $W$ on the $\alpha_i$, and holomorphic for $(\alpha_0,\alpha_1,\ldots,\alpha_n)\in (\C^\times)^n$ and $\beta$ satisfying $q^{-\frac{1}{2}} < \min(|\beta|,|\alpha_0\beta|^{-1})$.
\end{theorem}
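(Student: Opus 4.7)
The plan is to follow the Casselman--Shalika strategy of \cite{BFF97} for the non-split case (Theorem \ref{thm:FE-nonsplit}), adapted to the split torus setting. I first establish meromorphic continuation of $H_\chi(g)$, then verify that the normalized function $\calH_\chi(g)$ is invariant under each simple reflection of the Weyl group $W$, and finally upgrade to joint holomorphy in the region specified in the theorem statement via a Hartogs-type argument.

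For the meromorphic continuation, the convergence domain \eqref{domainC_0prime} is verified by dominating the defining integral \eqref{eq:bessel-split} by a standard unramified intertwining integral associated to $w_0$, multiplied by the $F^\times$-integration coming from $Z \mo T$ weighted by $\lambda_0$; the extra condition $|\alpha_n| < q^{\pm 1/2}\min(|\beta|,|\alpha_0\beta^{-1}|)$ reflects precisely this additional integral. Continuation to all nonzero complex parameters then follows from the rational structure of the unramified principal series, together with the uniqueness of the Bessel functional on $\pi$ in general position (Section \ref{sec:bessel-functional}). For the $W$-invariance step, I would invoke, for each simple reflection $w = s_i$, the standard intertwining operator $M(w,\chi): V_\pi \to V_{w\chi}$ together with uniqueness of the Bessel functional to obtain, after meromorphic continuation, an identity
\[
B_{w\chi} \circ M(w,\chi) = c(w,\chi) \cdot B_\chi,
\]
where $c(w,\chi)$ is a rational function in the Satake parameters (and possibly $\beta$). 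Combined with the Gindikin--Karpelevich formula
\[
M(w,\chi)\Phi_\chi = \prod_{\gamma}\frac{1-q^{-1}\chi^{\gamma^\vee}(\varpi)}{1-\chi^{\gamma^\vee}(\varpi)} \Phi_{w\chi},
\]
with the product running over positive roots $\gamma$ sent to negative roots by $w^{-1}$, this yields a functional equation $H_{w\chi}(g) = c(w,\chi) \cdot (\text{GK factor}) \cdot H_\chi(g)$, and Weyl invariance of $\calH_\chi$ reduces to matching these product factors against the normalizing numerator and denominator of \eqref{equation:FEsplit}.

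For the reflections $s_i$ with $i \leq n-1$ (transpositions inside the $\GL_{n-1}$ factor of the Levi $\M$), the calculation is identical to the non-split case and contributes the factors $(1-\alpha_0^{-1}\alpha_i\alpha_j q^{-1})(1-\alpha_i\alpha_j^{-1}q^{-1})$ in the denominator; these factors do not involve $\beta$, since $s_i$ commutes with the choice of $T$. The new phenomenon is the reflection $s_n: \alpha_n \mapsto \alpha_0\alpha_n^{-1}$: here the rank-one computation must be carried out on the $\GSpin_3 \cong \GL_2$ factor of $\M$ using the isomorphism $\iota$ of Section \ref{subsec:acc-iso}, and the $n(1)$-translate in \eqref{eq:bessel-split} together with the $Z \mo T$ integration produces both the extra denominator factor $(1-\alpha_0^{-1}\alpha_n^2 q^{-1})$ and the numerator factors $(1-\alpha_i\beta^{-1}q^{-1/2})(1-\alpha_0^{-1}\alpha_i\beta q^{-1/2})$ in \eqref{equation:FEsplit}; these encode the interaction of the Satake parameters of $\pi$ with the split-torus character $\lambda_0$.

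The main obstacle is precisely this rank-one Bessel computation in the split case. In the non-split setting, $T$ is compact modulo center, so the reflection $s_n$ reduces cleanly to a standard $\GL_2$ Whittaker-type calculation; in the split setting, the noncompactness of $T$ combined with the $n(1)$-translate produces a genuinely new $F^\times$-integral whose evaluation, via the Iwasawa decomposition on $\GL_2$ and the unramified characters, yields the $\beta$-dependent factors above. Carefully tracking Haar-measure normalizations and the identification $\iota$ is where most of the effort lies. Once functional equations are established for all simple reflections, joint $W$-invariance of $\calH_\chi$ follows, and holomorphy in the stated region of $(\alpha_0,\ldots,\alpha_n,\beta)$ follows from a standard Hartogs-type argument, since each potential pole of $H_\chi$ in this region is canceled by one of the factors in the normalizing ratio of \eqref{equation:FEsplit}.
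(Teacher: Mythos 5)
Your overall architecture---meromorphic continuation from the rational/uniqueness structure, reduction of $W$-invariance to rank-one functional equations for the simple reflections, and the observation that only the reflection $\alpha_n\mapsto\alpha_0\alpha_n^{-1}$ produces the $\beta$-dependent factors---matches the shape of the intended proof, which follows \cite[\S 4]{BFF97}. But there is a genuine gap at exactly the point you yourself flag as ``the main obstacle'': the rank-one computation that produces the numerator factors $(1-\alpha_i\beta^{-1}q^{-1/2})(1-\alpha_0^{-1}\alpha_i\beta q^{-1/2})$ and the denominator factors $(1-\alpha_0^{-1}\alpha_i^2q^{-1})$ is neither carried out nor correctly identified, and those factors \emph{are} the content of the theorem. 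Writing $B_{w\chi}\circ M(w,\chi)=c(w,\chi)\,B_\chi$ by uniqueness only relocates the problem into the scalar $c(w,\chi)$: the Gindikin--Karpelevich factor accounts for the $q^{-1}$-type numerators absorbed into the denominator of \eqref{equation:FEsplit}, but for the last reflection it is precisely $c(w,\chi)$ that carries the dependence on $\beta$, and your proposal only asserts that its evaluation ``yields the $\beta$-dependent factors above.'' Calling the non-split rank-one step ``a standard $\GL_2$ Whittaker-type calculation'' is also off the mark: in both cases the inner integral over $T$ (resp.\ $Z\mo T$) against $\lambda$ is a torus period, not a Whittaker integral.

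What the paper actually does at this step is recognize that inner integral as the (non-split, resp.\ split) unramified \emph{Waldspurger} functional for the $\GL_2$-principal series $\ind(\chi_n,\chi_0\chi_n^{-1})$, via the isomorphism $\GSpin_3\cong\GL_2$ of Section \ref{subsec:acc-iso} (replacing $\PGL_2\cong\SO_3$ in \cite{BFF97}); the functional equation for the Waldspurger model established in \cite{BFF97}, transported to $\GL_2$, is what supplies the factors $(1-\alpha_n\beta^{-1}q^{-1/2})(1-\alpha_0^{-1}\alpha_n\beta q^{-1/2})$ and $(1-\alpha_0^{-1}\alpha_n^2q^{-1})$ as well as the shifted convergence condition relating $|\alpha_n|$ to $|\beta|$ and $|\alpha_0\beta^{-1}|$. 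To complete your argument you must either import and adapt that Waldspurger functional equation or evaluate the $F^\times$-integral explicitly; as written, the step on which everything depends is deferred rather than proved. Two smaller inaccuracies: the factors $(1-\alpha_0^{-1}\alpha_i\alpha_jq^{-1})$ are attached to the long roots $e_i+e_j$ and arise only from Weyl words involving the last reflection, not from the transpositions $s_i$ alone; and the final holomorphy claim is not a Hartogs argument but a pole-cancellation argument combining $W$-invariance with the explicit polar divisors of $H_\chi$.
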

The proofs of Theorems \ref{thm:FE-nonsplit} and \ref{thm:FE-split} are essentially the same as those for the case of odd orthogonal groups (see \cite[\S 4]{BFF97}). We highlight one main difference. To obtain the functional equation involving the action of $(n,n+2)$, \cite{BFF97} uses the isomorphism $\PGL_2 \cong \SO_3$, which identifies the inner integral in (3.8) (resp. in the last line of p.152) in \cite{BFF97} as a linear combination of right translations of the non-split (resp. split) unramified Waldspurger functions for $\ind(\chi_n,\chi_n^{-1})$. In our setting, we instead use the isomorphism $\GL_2 \cong \GSpin_3$ introduced in Section~\ref{subsec:acc-iso}, and the resulting inner integral is a Waldspurger functional for $\ind(\chi_n,\chi_0\chi_n^{-1})$. Consequently, $\alpha_0$ appears in both the domains of convergence and the normalizing factors of Theorems~\ref{thm:FE-nonsplit} and \ref{thm:FE-split}.

It is worth mentioning that the meromorphic continuation of the function $B(\Psi)$ already follows from the uniqueness result in \cite{KLL23} together with Bernstein's principle on meromorphic continuation \cite{Ban98}.

\section{Explicit formula for the Bessel model}\label{sec:explicit-formula}
As announced in the introduction, our main theorem (Theorem~\ref{thm:main}) 
provides an explicit formula for $\calH_\chi(g)$. 
We begin by introducing the necessary notation and background, after which we restate the theorem in this framework. 
The non-split and split cases will be stated separately, as their proofs require slightly different arguments.

Since
\begin{equation}\label{eq:equivariance-bessel}
\calH_\chi(rgk) = \theta_S(r) \calH_\chi(g)
\end{equation}
for $r \in R$, $g\in G$, and $k \in K$, it suffices to determine the function $\calH_\chi$ on a set of representatives for the double cosets $R \mo G /K$. Observe that the projection $\proj$ induces a bijection between the double cosets:
\begin{equation}\label{eq:double-coset-bijection}
    R \mo G /K \xlongrightarrow[]{\sim} R^\prime \mo G^\prime / K^\prime.
\end{equation}
A set of representatives for $R^\prime \mo G^\prime / K^\prime$ was described in \cite{BFF97}. Using their description and the bijection \eqref{eq:double-coset-bijection}, a set of representatives for $R \mo G /K$ can be taken as follows. Let $\delta=(\ell_1,\ldots,\ell_n)$ where $\ell_n \geq 0$. 
Let
\begin{align*}
\varpi^\delta &= \prod_{i=1}^n e_i^*(\varpi^{\ell_i}).
\end{align*}
If $T$ is non-split, $\{\varpi^\delta\}$ gives a set of coset representatives.
If $T$ is split, $\{\tilde{\varpi}^\delta\}$ gives a set of coset representatives, where
\[
\tilde{\varpi}^\delta = \begin{cases}
    \varpi^\delta & \text{ if } \ell_n = 0,\\
    n(1)\varpi^\delta & \text{ if } \ell_n > 0.
\end{cases}
\]
For convenience, we write $B_\delta$ for $\calH(\varpi^\delta)$ if $T$ is non-split and for $\calH(\tilde{\varpi}^\delta)$ if $T$ is split. 

From equation \eqref{eq:equivariance-bessel} it follows that $\mathcal{H}_\chi(g) = 0$ unless $\theta_S$ is identically one on $R \cap gKg^{-1}$.
A short calculation shows that this implies
$B_\delta = 0$ unless 
\begin{equation}\label{eq:dominant-condition}
\ell_1 \geq \ell_2 \geq \cdots \geq \ell_n \geq 0.
\end{equation}
In later sections, we say that a vector $\delta = (\ell_1,\ldots,\ell_n)$ is dominant if \eqref{eq:dominant-condition} holds.

Let $\calA$ be the alternator $\sum_{w \in W} (-1)^{\text{length}(w)} w$ an element of the group algebra $\C[W]$. Let $\Delta = \calA(\alpha_1^n \alpha_2^{n-1} \cdots \alpha_n).$
According to the Weyl denominator formula for $\GSp_{2n}(\C)$
\begin{equation}\label{eq:Delta}
    \Delta = (-1)^n\alpha_0^{\frac{n(n+1)}{2}} \prod_{i=1}^n \alpha_i^{-(n+1-i)}(1- \alpha_0^{-1}\alpha_i^2) \prod_{1 \leq i < j \leq n}(1- \alpha_0^{-1} \alpha_i \alpha_j)(1- \alpha_i \alpha_j^{-1}).
\end{equation}
This can also be easily obtained from the Weyl denominator formula for $\Sp_{2n}(\C)$ (see, for example, \cite[(1.17)]{BFF97}) by a change of variable.
Also let
\begin{align*}
e_\delta =  -\frac{1}{2} \sum_{i=1}^n \ell_i(2n+1-2i),
\end{align*}
so that $\delta_B(\varpi^\delta) = q^{e_\delta}.$
\begin{theorem}\label{thm:formula-nonsplit}
Suppose $T$ is non-split and $\delta = (\ell_1,\ldots,\ell_n)$ is dominant. Then
\[
B_\delta = \frac{q^{e_\delta}}{1+q^{-1}} \Delta^{-1} \calA\left(\prod_{i=1}^n \alpha_i^{\ell_i + i} (1-\alpha_0\alpha_i^{-2} q^{-1})\right).
\]
In particular, $\calH_\chi(I_{2n+1}) = 1$. We note that if $\ell_n = 1$, this may be written more simply as 
\begin{equation}
B_\delta = q^{e_\delta} \Delta^{-1} \calA\left(\alpha_n \prod_{i=1}^{n-1} \alpha_{i}^{\ell_i + (n+i-1)} (1-\alpha_0\alpha_i^{-2} q^{-1})\right).
\end{equation}
\end{theorem}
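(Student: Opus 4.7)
The plan is to apply the Casselman--Shalika method, as adapted to Bessel models in \cite{BFF97}, incorporating the $\GL_2 \cong \GSpin_3$ isomorphism of Section~\ref{subsec:acc-iso} at the short-root step. Both sides of the claimed formula are, by Theorem~\ref{thm:FE-nonsplit} and by construction respectively, Weyl-invariant rational functions of the Satake parameters $\alpha_0,\ldots,\alpha_n$. It therefore suffices to compute $H_\chi(\varpi^\delta)$ in the domain \eqref{domainC_0} where the defining integral \eqref{eq:bessel-nonsplit} converges, write the answer as a sum of Weyl translates of a single monomial in the $\alpha_i$, and then assemble the alternator using the Weyl denominator formula \eqref{eq:Delta}.

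To execute this, I would first substitute $\Psi = \pi(\varpi^\delta)\Phi_\chi$ into \eqref{eq:bessel-nonsplit}, so that the integrand becomes $\Phi_\chi(w_1 u t \varpi^\delta)\theta_S^{-1}(u)$. A change of variable in the $U$-integral, combined with the left $\borel$-covariance of $\Phi_\chi$, produces the overall modulus factor $q^{e_\delta}$ together with a character value of the shape $\prod_i \alpha_i^{\ell_i}$. The remaining inner integral over $U$ is then evaluated via the Iwasawa decomposition of $w_1 u t$, subdividing $U$ into cells indexed by Weyl group elements so that the integrand factorizes cell-by-cell. The short-root contribution is precisely where our computation differs from \cite{BFF97}: rather than the identification via $\PGL_2 \cong \SO_3$ used there, we use the isomorphism $\GSpin_3 \cong \GL_2$ from Section~\ref{subsec:acc-iso}, which realizes the short-root inner integral as the unramified Waldspurger functional for the $\GL_2$-principal series $\ind(\chi_n, \chi_0 \chi_n^{-1})$. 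The dependence on $\alpha_0$ in the factors $(1-\alpha_0\alpha_i^{-2}q^{-1})$ arises precisely from this identification, and the overall constant $(1+q^{-1})^{-1}$ is the single non-cancelling Waldspurger factor in the non-split case.

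Assembling the cell-by-cell contributions expresses $H_\chi(\varpi^\delta)$ as a sum $q^{e_\delta}\sum_{w \in W} c_w(\chi)\,\alpha^{w\delta}$, and multiplying through by the Weyl denominator $\Delta$ of \eqref{eq:Delta} collapses this sum into the alternator $\calA\left(\prod_{i=1}^n \alpha_i^{\ell_i+i}(1-\alpha_0\alpha_i^{-2}q^{-1})\right)$, as claimed. The identity $\calH_\chi(I_{2n+1})=1$ then follows by specializing $\delta=0$ and reading off \eqref{eq:Delta}, while the simpler form at $\ell_n=1$ is obtained by a direct combinatorial simplification of the alternator expression, exploiting that the short-root reflection absorbs the $(1+q^{-1})^{-1}$ normalization when $\ell_n=1$. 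The main obstacle I expect is the explicit cell-by-cell Iwasawa decomposition of $w_1 u t$ over $U$ and the careful identification of the resulting $c$-functions $c_w(\chi)$ via the Waldspurger computation: tracking how the additional $e_0^*$-direction in $\GSpin_{2n+1}$ modifies the orthogonal-group argument of \cite{BFF97} is the technical heart of the argument, whereas the final alternator assembly and the verification at $\delta=0$ and $\ell_n=1$ are essentially combinatorial bookkeeping with \eqref{eq:Delta}.
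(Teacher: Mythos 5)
Your high-level frame agrees with the paper's: both use the Casselman--Shalika philosophy, reduce the rank-one step to $\GL_2$ via the isomorphism of Section~\ref{subsec:acc-iso}, exploit the $W$-invariance from Theorem~\ref{thm:FE-nonsplit}, and assemble the final answer with the Weyl denominator formula \eqref{eq:Delta}. However, the central mechanism by which the monomial coefficients are actually extracted is missing, and what you propose in its place would not work as stated. You plan to evaluate $\int_U \Phi_\chi(w_1 u t\varpi^\delta)\theta_S^{-1}(u)\,du$ directly by ``subdividing $U$ into cells indexed by Weyl group elements.'' That direct evaluation is precisely the difficulty the Casselman--Shalika method is designed to circumvent, and you yourself flag it as the unresolved obstacle. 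The paper instead introduces the auxiliary Iwahori-fixed vector $F_\delta(g)=\int_{T_0}\int_{U_0}\Phi_\chi(gut\varpi^\delta)\,du\,dt$ (note: compact domains $U_0=U\cap K$, $T_0=T\cap K$, not all of $U$), expands it in the Casselman basis $\{f_w\}$ dual to the functionals $f\mapsto T_wf(I_{2n+1})$, and computes the coefficients as $R(\delta,w;\chi)=c_w(\chi)\,\sigma_{{}^w\chi}(\varpi^\delta)$ using Casselman's formula \eqref{eq:cw} for the intertwining operators together with Macdonald's formula for the compact $T_0$-integral of the $\GL_2$-spherical vector in $\ind(\chi_n',\chi_0\chi_n'^{-1})$. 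Then $B_\delta=L(F_\delta)=\sum_w R(\delta,w;\chi)L(f_w)$, only $w_0$ and $w_1$ can contribute the distinguished monomial $\alpha_1^{-\ell_1}\cdots\alpha_n^{-\ell_n}$, and a lemma comparing the bases $\{\phi_w\}$ and $\{f_w\}$ gives $L(f_{w_0})=0$ and $L(f_{w_1})=1$; the single $w_1$-term is then symmetrized via Theorem~\ref{thm:FE-nonsplit}. None of these ingredients (the vector $F_\delta$, the Casselman basis, the vanishing $L(f_{w_0})=0$, the isolation of the $w_1$-term) appear in your outline, and without them the coefficient extraction is not accomplished.

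A secondary inaccuracy: the identification of an inner integral with the unramified Waldspurger functional for $\ind(\chi_n,\chi_0\chi_n^{-1})$ belongs to the proof of the functional equations (Theorems~\ref{thm:FE-nonsplit} and~\ref{thm:FE-split}), not to the evaluation of $B_\delta$. In the evaluation itself, the rank-one input is the Macdonald spherical-function formula \eqref{eq:Macdonald} applied to $\sigma_{{}^w\chi}(\varpi^\delta)=\int_{T_0}\Phi_{{}^w\chi}(t\varpi^\delta)\,dt$, and that is where the prefactor $(1+q^{-1})^{-1}$ and the factors $(1-\alpha_0\alpha_i^{-2}q^{-1})$ (after applying $c_{w_1}(\chi)$ and the symmetrizer) originate. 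Your closing remarks on the $\delta=0$ normalization and the simplification of the alternator via the short-root reflection are consistent with the paper's concluding computation.
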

\begin{theorem}\label{thm:formula-split}
Suppose $T$ is split and $\delta = (\ell_1,\ldots,\ell_n)$ is dominant. Then
\begin{equation}\label{eq:k1-general}
B_\delta = \frac{q^{e_\delta}}{1 - q^{-1}} \Delta^{-1} \calA\left(\prod_{i=1}^n \alpha_i^{\ell_i + i} (1- \alpha_0\alpha_i^{-1} \beta^{-1} q^{-\frac{1}{2}})(1- \alpha_i^{-1} \beta q^{-\frac{1}{2}})\right).
\end{equation}
In particular, the function $\calH_\chi$ is holomorphic for all $(\alpha_1,\ldots,\alpha_n,\beta) \in (\C^\times)^{n+1}$, and $\calH_\chi(I_{2n+1}) = 1$.
We note that if $\ell_n = 1$, this may be written more simply as 
\begin{equation}\label{eq:k1-special}
B_\delta = q^{e_\delta} \Delta^{-1} \calA\left(\alpha_n \prod_{i=1}^{n-1} \alpha_i^{\ell_i + (n+1-i)}(1- \alpha_0 \alpha_i^{-1} \beta^{-1} q^{-\frac{1}{2}})(1- \alpha_i^{-1} \beta q^{-\frac{1}{2}})\right).
\end{equation}
\end{theorem}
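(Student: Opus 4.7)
The proof strategy is to adapt the Casselman--Shalika argument of \cite[\S 6]{BFF97} (for the split orthogonal case) to the $\GSpin$ setting, with the accidental isomorphism $\PGL_2 \cong \SO_3$ used there replaced by the isomorphism $\GL_2 \cong \GSpin_3$ from Section~\ref{subsec:acc-iso}. The net effect is that an extra $\GL_1$-worth of parameter, namely $\alpha_0$, must be tracked through every step.

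I would begin by reducing to dominant $\delta$: the equivariance \eqref{eq:equivariance-bessel} together with the bijection \eqref{eq:double-coset-bijection} shows $B_\delta = 0$ unless $\delta$ is dominant, so it suffices to prove the formula on $\tilde{\varpi}^\delta$ for such $\delta$. Theorem~\ref{thm:FE-split} combined with the alternating nature of $\Delta$ then forces the product of $\Delta$, the normalizing factor from \eqref{equation:FEsplit}, and $(1-q^{-1})q^{-e_\delta}B_\delta$ to be an alternating Laurent polynomial in $(\alpha_1,\ldots,\alpha_n)$ under the $W$-action. A standard Casselman-style analysis of the support of the unramified vector on the Bruhat cells appearing in \eqref{eq:bessel-split} bounds the exponents, so this alternating polynomial has the shape $\calA\bigl(\prod_{i=1}^n \alpha_i^{\ell_i+i}\,Q(\alpha_0,\alpha_i,\beta)\bigr)$ for a single polynomial $Q$ of controlled degree in $\alpha_i^{\pm 1}$.

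To pin down $Q$ I would evaluate both sides in two base cases. Evaluation at the identity $\delta = 0$ amounts to a Gindikin--Karpelevich type integral over $U$, which yields $\calH_\chi(I_{2n+1})=1$ and fixes the overall constant $1/(1-q^{-1})$. The case $\ell_n=1$ from \eqref{eq:k1-special} is obtained by performing the innermost integration over the top layer of $U$ together with the outer integration over $Z \mo T$. Via the identification $\GL_2 \cong \GSpin_3$ from Section~\ref{subsec:acc-iso}, this inner integral becomes a split Waldspurger functional on $\ind(\chi_n,\chi_0\chi_n^{-1})$; its explicit value, paralleling the corresponding step of \cite[\S 6]{BFF97} but with $\chi_0\chi_n^{-1}$ replacing $\chi_n^{-1}$, is precisely $(1-\alpha_0\alpha_n^{-1}\beta^{-1}q^{-1/2})(1-\alpha_n^{-1}\beta q^{-1/2})$. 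This identifies $Q$ on the last coordinate, and the $W$-alternating structure propagates the same factor to every coordinate.

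The final step extends the formula to arbitrary dominant $\delta$. With the shape of the alternating polynomial and its leading coefficient fixed, I would proceed by induction on $\ell_n$, splitting into the cases $\ell_n=0$ (where $\tilde{\varpi}^\delta = \varpi^\delta$ and the computation is parallel to the non-split case) and $\ell_n>0$ (where the shift $\tilde{\varpi}^\delta = n(1)\varpi^\delta$ must be absorbed into $U$ via an Iwasawa decomposition of $n(\varpi^{-\ell_n})$, after which the inner $\GSpin_3$ computation is applied a second time). The main obstacle I expect is the bookkeeping of this $n(1)$-shift: it is responsible both for $\beta$ entering the formula at every coordinate rather than only the last, and for the passage from the $1+q^{-1}$ normalization in the non-split case (Theorem~\ref{thm:formula-nonsplit}) to the $1-q^{-1}$ normalization in \eqref{eq:k1-general}. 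Once matching is achieved from the base cases together with the $W$-alternating structure, the holomorphy of $\calH_\chi$ on $(\C^\times)^{n+1}$ is immediate, since all apparent singularities cancel inside the alternator.
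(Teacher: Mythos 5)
Your overall strategy (Casselman--Shalika method, the functional equation of Theorem~\ref{thm:FE-split}, the $\GL_2\cong\GSpin_3$ inner computation, and a recursion in $\ell_n$ handling the shift by $n(1)$) is the right family of ideas and matches the paper in spirit, but the middle step of your argument has a genuine gap. You claim that $W$-alternation together with bounds on the exponents forces the numerator to have the factored shape $\calA\bigl(\prod_{i=1}^n \alpha_i^{\ell_i+i}\,Q(\alpha_0,\alpha_i,\beta)\bigr)$ for a \emph{single} quadratic $Q$, which you then pin down by two evaluations ($\delta=0$ and $\ell_n=1$). This does not follow: the space of alternating Laurent polynomials with each exponent of $\alpha_i$ confined to a window of length $2$ around $\ell_i+i$ is spanned by the roughly $3^n$ independent elements $\calA\bigl(\prod_i\alpha_i^{m_i}\bigr)$, whereas your proposed family has only the $3$ coefficients of $Q$ as parameters; moreover your two base cases live at \emph{different} values of $\delta$, so they impose no constraint on $B_\delta$ for a general dominant $\delta$ unless the uniform product structure in $\delta$ is already known --- which is precisely the content of the theorem. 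The paper avoids this interpolation entirely: it expands $P_\delta=\int_{N_0}\Phi_\chi(\,\cdot\,n\varpi^\delta)dn$ in the Casselman basis $\{f_w\}$, observes that only $w=w_0$ contributes the extremal monomial $\alpha_0^{\sum\ell_i}\alpha_1^{-\ell_1}\cdots\alpha_n^{-\ell_n}$ with coefficient $c_{w_0}(\chi)({}^{w_0}\chi\delta_B^{1/2})(\varpi^\delta)$, computes $B(f_{w_0})=(1-\alpha_0^{-1}\alpha_n\beta q^{-1/2})^{-1}$ by a Bruhat-cell support analysis, and then applies the symmetrizer to this one term; $W$-invariance of $\mathcal{N}_{\chi,\lambda}^{-1}B(P_\delta)$ then yields the full answer for every $\delta$ at once, and Lemma~\ref{lem:BH} converts $B(P_\delta)$ into $B_\delta$.

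Two smaller points. First, the product $(1-\alpha_0\alpha_n^{-1}\beta^{-1}q^{-1/2})(1-\alpha_n^{-1}\beta q^{-1/2})$ you attribute to the inner Waldspurger integral is not what that integral produces: the unnormalized computation gives the geometric series $(1-\alpha_0^{-1}\alpha_n\beta q^{-1/2})^{-1}$, and the two linear factors per variable enter only after multiplying by the normalizer $\mathcal{N}_{\chi,\lambda}$ from \eqref{equation:FEsplit} and applying $w_0$ inside the alternator. Second, treating $\delta=0$ as an easy ``Gindikin--Karpelevich'' base case is misleading: because of the character $\theta_S$ on $U$ and the outer $Z\backslash T$-integration, $B(\Phi_\chi)$ is a genuine Bessel integral and is no easier than the general case; in the paper $\calH_\chi(I_{2n+1})=1$ is a \emph{consequence} of the formula, not an input to it.
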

\begin{remark}
Note that if $\alpha_0 = 1$, the formula in Theorem \ref{thm:formula-nonsplit} (resp. Theorem \ref{thm:formula-split}) coincides with the formula in Theorem 1.5 (resp. Theorem 1.6) in \cite{BFF97}. 
\end{remark}

\section{Proof of the formula}\label{sec:proof-mainthm}
The evaluation of the Bessel model given in Theorems \ref{thm:formula-nonsplit} and \ref{thm:formula-split} is obtained
by applying the method of Casselman and Shalika \cite{Cas80,CS80}. 
Throughout the proofs we shall assume that $\chi$ is regular, i.e. ${^w\chi} \neq \chi$ for all nonidentity $w \in W$. The general case then follows from the analytic continuation.

Let $\mathcal{B}$ be the Iwahori subgroup of $K$.
It follows from the Iwasawa decomposition of $G$ and the Bruhat decomposition over $\calO/\varpi \calO$ that the space of right-Iwahori-fixed vectors $\ind(\chi)^\calB$ is $|W|$-dimensional;
moreover, a basis is given by the functions $\phi_w$ defined by
\[
\phi_w(bw^{\prime -1}b_1) = 
\begin{cases}
    \Phi_\chi(b) & \text{ if } w = w^\prime,\\
    0 & \text{ otherwise},
\end{cases}
\]
where $b \in B$, $w \in W$, and $b_1 \in \calB$.
If $\chi$ is regular, then it is shown in \cite[\S 3]{Cas80} that the linear functionals
 \begin{align*}
     \Lambda_w: \ind(\chi)^\calB & \longrightarrow \C\\
     f &\longmapsto (T_wf)(I_{2n+1})
 \end{align*}
are linearly independent. Hence they form a basis of the dual space of $\ind(\chi)^\calB$. We denote the dual basis of those linear functionals by $\{f_w\}$.

\subsection{Proof of Theorem \ref{thm:formula-nonsplit}}Suppose $T$ is non-split. For $g \in G$, let
\[
F_\delta(g) = \int_{T_0} \int_{U_0} \Phi_\chi(gut \varpi^\delta) du dt,
\]
where $U_0 = U \cap K$.
Clearly, $F_\delta \in \ind(\chi)$. Moreover, by an argument similar to that in \cite[pp. 154--155]{BFF97}, we obtain the following lemma.
\begin{lemma}
$F_\delta \in \ind(\chi)^\calB$.
\qed
\end{lemma}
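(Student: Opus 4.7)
The plan is to verify the two conditions defining membership in $\ind(\chi)^{\calB}$: left $(B,\delta_B^{1/2}\chi)$-equivariance and right $\calB$-invariance. The left equivariance is immediate, because $\Phi_\chi \in V_\pi$ and left multiplication by $b \in B$ simply pulls the factor $\delta_B^{1/2}(b)\chi(b)$ out of the integral defining $F_\delta$. For the right $\calB$-invariance I would use the Iwahori factorization $\calB = (\calB \cap U^-)\cdot A_0 \cdot U_0$, where $A_0 = A \cap K$ and $U_0 = U \cap K$, and check invariance under each of the three factors separately.

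Invariance under right multiplication by $u_0 \in U_0$ is immediate via the substitution $u \mapsto u_0^{-1}u$ in the inner integral. Invariance under $a \in A_0$ is almost as short: since $A$ is abelian and contains both $T_0$ and $\varpi^\delta$, and $A$ normalizes $U_0$ while preserving its Haar measure, one rewrites $\Phi_\chi(gaut\varpi^\delta) = \Phi_\chi(g(aua^{-1})t\varpi^\delta a)$, uses the right $K$-invariance of $\Phi_\chi$ to drop the trailing $a \in A_0 \subset K$, and then substitutes $u \mapsto a^{-1}ua$ to conclude that $F_\delta(ga) = F_\delta(g)$.

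The heart of the argument, and the main obstacle, is the case of right multiplication by $v \in \calB \cap U^-$. Here the product $vu$ lies in $\calB$ for every $u \in U_0$, so it admits a unique LDU decomposition $vu = u_1 a_1 v_1$ with $u_1 \in U_0$, $a_1 \in A_0$, and $v_1 \in \calB \cap U^-$, coming from the Iwahori factorization $\calB = U_0 A_0 (\calB \cap U^-)$. Using that $t \in T_0 \subset A$ commutes with $\varpi^\delta$ and that $A$ is abelian, one rewrites
\[
vut\varpi^\delta \;=\; u_1\, t\varpi^\delta \cdot a_1 k_1, \qquad k_1 \;=\; \varpi^{-\delta}(t^{-1}v_1 t)\varpi^\delta.
\]
The decisive input is that the dominance of $\delta$ forces $\varpi^{-\delta}(\calB \cap U^-)\varpi^\delta \subset K$: conjugation sends a negative-root element $y_{-\alpha}(c)$ with $c \in \mathfrak{p}$ to $y_{-\alpha}(\varpi^{\langle \alpha,\delta\rangle}c)$, whose argument remains in $\calO$ because $\langle \alpha,\delta\rangle \geq 0$. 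Consequently $k_1$, and hence $a_1 k_1$, lies in $K$, and the right $K$-invariance of $\Phi_\chi$ eliminates both factors. A final change of variables $u \mapsto u_1 = u_1(v,u)$---a bijection of $U_0$ whose Jacobian is a unit because $v$ is a pro-$p$ perturbation---returns the integral to $F_\delta(g)$. The one point I expect to need care is the measure-preservation of this last substitution; since the projection $\proj$ induces an isomorphism on $U$ and $U^-$, it reduces verbatim to the $\SO_{2n+1}$ calculation of \cite[pp.~154--155]{BFF97}, with the additional $\GL_1$ factor affecting only the (already handled) $A_0$-component.
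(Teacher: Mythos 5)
There is a genuine gap, and it sits exactly where the content of this lemma lies. Your ``Iwahori factorization'' $\calB=(\calB\cap U^-)\cdot A_0\cdot U_0$ is false: $U$ is the unipotent radical of the \emph{parabolic} $P=\M\ltimes\U$, not of the Borel. The correct factorization is $\calB=N_1^-A_0N_0$ with $N$ the unipotent radical of $B$, and $N_0=U_0\cdot(N\cap M)_0$, where $(N\cap M)_0$ is the root subgroup for the short simple root $e_n$ sitting inside the $\GSpin_3\cong\GL_2$ block of the Levi $M$. Consequently the three cases you check do not generate $\calB$: right translation by $x_{e_n}(\calO)$ and $x_{-e_n}(\mathfrak p)$ is never addressed. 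This missing case is precisely the one that cannot be disposed of by a change of variables in $u$, and it is the only place where the $T_0$-integration is actually needed. The standard argument (this is the content of \cite[pp.~154--155]{BFF97}) conjugates such an element past $U_0$ into the $\GSpin_3$ block and then uses that, for the unramified non-split torus, $T_0\cdot\calB_{\GL_2}=\GL_2(\calO)$ (equivalently, $T_0$ acts transitively on $\mathbb{P}^1(\calO/\mathfrak p)$), so that averaging over $T_0$ absorbs the extra Iwahori translation. Your proposal never uses the integral over $T_0$ in any essential way, which is a sign that the hard part of the lemma has been bypassed rather than proved.

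A second, related error: you assert $T_0\subset A$ and that $t\in T_0$ commutes with $\varpi^\delta$. In the case covered by this lemma $T$ is \emph{non-split} (isomorphic to $\mathrm{Res}^E_F\GL_1$), so it is not contained in the split maximal torus $A$, and it does not commute with $e_n^*(\varpi^{\ell_n})$ when $\ell_n>0$; nor is $(\varpi^\delta)^{-1}T_0\varpi^\delta$ contained in $K$ in general. So both the $A_0$-step and the $(\calB\cap U^-)$-step, as written, rely on a false premise. The parts of your argument that are sound --- the left $(B,\delta_B^{1/2}\chi)$-equivariance, the $U_0$-substitution, and the observation that dominance of $\delta$ gives $\varpi^{-\delta}N_1^-\varpi^\delta\subset K$ --- do appear in the correct proof (compare the paper's proof of the analogous lemma for $P_\delta$ in the split case), but they must be combined with the $T_0$-transitivity statement above to cover all of $\calB$.
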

Write 
\begin{equation}\label{eq:linearrep-nonsplit}
F_\delta= \sum_{w \in W} R(\delta,w;\chi) f_w.
\end{equation}
To evaluate the coefficients, we first obtain
\begin{align}
R(\delta,w;\chi) &= T_wF_\delta(I_{2n+1}) \nonumber\\
&= \int_{N_w \mo N} \int_{T_0} \int_{U_0} \Phi_\chi(w^{-1}nut\varpi^\delta) du dt dn \nonumber\\
&= \int_{T_0} T_w\Phi_\chi(t\varpi^\delta) dt. \nonumber 
\end{align}
By \cite[Theorem 3.1]{Cas80}, we have
\[
T_w\Phi_\chi = c_w(\chi) \Phi_{^w\chi},
\]
where $\Phi_{^w\chi}$ is the standard spherical vector in $\ind({^w\chi})$, and the coefficients $c_w(\chi)$ are given by
\begin{equation}\label{eq:cw}
c_w(\chi) = \prod_{\substack{r \in R^+ \\ w(r)<0}} \frac{1-q^{-1}\chi(r^\vee(\varpi))}{1-\chi(r^\vee(\varpi))}.
\end{equation}
Hence
\[
R(\delta,w;\chi) = c_w(\chi) \sigma_{^w\chi}(\varpi^\delta),
\]
where
$$\sigma_{^w\chi}(\varpi^\delta) = \int_{T_0} \Phi_{^w\chi}(t\varpi^\delta) dt.$$
Let $\iota_n$ be the embedding given by the composition of the $F$-points of the algebraic group isomorphism $\GL_2 \rightarrow \GSpin_3$ introduced in Section \ref{subsec:acc-iso} and the standard embedding of $\GSpin_3(F)$ into $G$. Now if
\[
^w(\chi_0,\chi_1,\ldots,\chi_n) = (\chi_0^\prime,\chi_1^\prime,\ldots,\chi_n^\prime),
\]
the function $$g \mapsto \Phi_{^w \chi}(t \iota_n(g))$$ gives a $K_2$-invariant vector in $\ind(\chi_n^\prime,\chi_0 \chi_n^{\prime -1})$, where $K_2 = \GL_2(\calO)$. By the Macdonald formula \cite[Theorem 4.2]{Cas80}, we obtain
\begin{multline}\label{eq:Macdonald}
\sigma_{^w\chi}(\varpi^\delta) = \frac{q^{e_\delta}}{1 + q^{-1}} \prod_{i=1}^{n-1} \chi_{i}^\prime(\varpi^{\ell_i}) \\
\times \frac{(\chi_n^\prime(\varpi) - \chi_0\chi_n^{\prime -1}(\varpi)q^{-1})\chi_n^{\prime}(\varpi)^{\ell_n} - (\chi_0\chi_n^{\prime -1}(\varpi) - \chi_n^{\prime}(\varpi)q^{-1})\chi_0^{\ell_n}\chi_n^{\prime}(\varpi)^{-\ell_n}}{\chi_n^\prime(\varpi) - \chi_0\chi_n^{\prime -1}(\varpi)}.
\end{multline}
This completes the evaluation of $R(\delta,w;\chi)$.

Assume that $\chi$ is dominant. For $f \in \ind (\chi)$, define
\begin{equation}\label{eq:expansion-nonsplit}
L(f) = \int_U f(w_1u) \theta_S^{-1}(u) du.
\end{equation}
This integral converges absolutely by comparison with $T_{w_1}(f)$. By \eqref{eq:bessel-nonsplit} and \eqref{eq:linearrep-nonsplit},
\begin{equation}\label{eq:B-sum}
B_\delta = L(F_\delta) = \sum_{w \in W} R(\delta,w;\chi)\, L(f_w).
\end{equation}
By the above computation of $R(\delta,w;\chi)$, the contributions with $w=w_0$ and $w=w_1$ are the only ones that can produce a term of the form
\[
(\text{a rational function in } \alpha_i \text{ independent of }\delta)\times
\alpha_1^{-\ell_1}\cdots \alpha_n^{-\ell_n}.
\]

Hence it suffices to compute only for $w = w_0$ and $w_1$. We have the following lemma, similar to \cite[Lemma 4.1]{BFF97}, relating the two bases $\{\phi_w\}$ and $\{f_w\}$ for the Iwahori-fixed vectors.
\begin{lemma}
The following hold:
\begin{enumerate}
    \item $\phi_{w_0} = f_{w_0}$;
    \item $\phi_{w_1} = f_{w_1} + \frac{(1-q^{-1})\alpha_0^{-1}\alpha_n^2}{1-\alpha_0^{-1}\alpha_n^2}f_{w_0}$;
    \item $L(f_{w_0}) = 0$;
    \item $L(f_{w_1}) = 1$. 
\end{enumerate}\qed
\end{lemma}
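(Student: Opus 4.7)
The plan is to reduce parts (1) and (2) to computations of $\Lambda_{w'}(\phi_{w_0})$ and $\Lambda_{w'}(\phi_{w_1})$, and then obtain (3) and (4) by evaluating $L$ on the explicit basis $\{\phi_w\}$ via the resulting change of basis.

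For parts (1) and (2), since $\{f_w\}$ is dual to $\{\Lambda_{w'}\}$, the identity $\phi_w = \sum_{w'} c_{w'} f_{w'}$ is equivalent to $c_{w'} = T_{w'}\phi_w(I_{2n+1})$ for every $w' \in W$. First I would use the standard support analysis --- $\phi_w$ is supported on $Bw^{-1}\mathcal{B}$ --- together with the Bruhat decomposition of the domain of the intertwining integral defining $T_{w'}$, to conclude that $T_{w'}\phi_{w_0}(I) = 0$ unless $w' = w_0$, and $T_{w'}\phi_{w_1}(I) = 0$ unless $w' \in \{w_1, w_0\}$. The diagonal values $T_{w_0}\phi_{w_0}(I) = 1$ and $T_{w_1}\phi_{w_1}(I) = 1$ follow directly: only $n = 1$ contributes to the integrand, with value $\Phi_\chi(I) = 1$, once measures are normalized so that $T_w\phi_w(I)=1$. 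The only genuine computation is $T_{w_0}\phi_{w_1}(I)$ in (2). Writing $w_0 = w_1 s_{e_n}$ with $\ell(w_0) = \ell(w_1) + 1$, the cocycle property gives $T_{w_0} = T_{w_1} \circ T_{s_{e_n}}$, and Casselman's rank-one formula expresses $T_{s_{e_n}} \phi_{w_1}$ as an explicit linear combination of $\phi_{w_1}$ and $\phi_{w_1 s_{e_n}} = \phi_{w_0}$ with coefficients in $\chi(e_n^\vee(\varpi))$. Since the coroot in $\GSpin_{2n+1}$ is $e_n^\vee = 2e_n^* - e_0^*$ (Section \ref{subsec:root-datum}), we have $\chi(e_n^\vee(\varpi)) = \alpha_n^2\alpha_0^{-1}$, and rearranging yields exactly the coefficient $(1-q^{-1})\alpha_0^{-1}\alpha_n^2 / (1 - \alpha_0^{-1}\alpha_n^2)$.

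For parts (3) and (4), invert the relations to obtain $f_{w_0} = \phi_{w_0}$ and $f_{w_1} = \phi_{w_1} - c\,\phi_{w_0}$ where $c = (1-q^{-1})\alpha_0^{-1}\alpha_n^2/(1-\alpha_0^{-1}\alpha_n^2)$. For (3), I would analyze the Bruhat cell of $w_1 u$ for $u \in U$ (the unipotent radical of the parabolic $P$ with Levi $\GL_1^{n-1}\times\GSpin_3$); because $U$ is strictly smaller than the full maximal unipotent $N$, the translate $w_1 U$ does not meet the top cell $Bw_0^{-1}\mathcal{B}$, so $\phi_{w_0}(w_1 u) = 0$ on the domain of integration and $L(\phi_{w_0}) = 0$. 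For (4), the same cell analysis shows that $w_1 u \in Bw_1^{-1}\mathcal{B}$ iff $u \in U_0 := U \cap K$; on this set $\phi_{w_1}(w_1 u) = \Phi_\chi(I) = 1$ and $\theta_S^{-1}(u) = 1$ since $\psi$ is unramified and $u \in U_0$, and with the normalization $\mathrm{vol}(U_0) = 1$ we get $L(\phi_{w_1}) = 1$. Combining, $L(f_{w_0}) = 0$ and $L(f_{w_1}) = 1 - c \cdot 0 = 1$, establishing (3) and (4).

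The main obstacle is the rank-one computation in (2), which is precisely where the similitude factor enters: in \cite[Lemma 4.1]{BFF97} the corresponding coefficient involves only $\alpha_n$ because the $\SO_{2n+1}$ coroot of the short simple root is $2e_n^*$, whereas here one must carry the extra $-e_0^*$ in the coroot $2e_n^*-e_0^*$ through the Casselman rank-one identity. Once this bookkeeping is correct, the remaining support, Bruhat, and measure arguments for (3) and (4) transfer essentially verbatim from \cite{BFF97}.
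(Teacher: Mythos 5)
Your proposal is correct and follows essentially the same route as the paper, which proves this lemma exactly as \cite[Lemma 4.1]{BFF97}: triangularity of $(\Lambda_{w'}(\phi_w))$ with respect to the Bruhat order plus Casselman's rank-one formula for (1)--(2), and the support/cell analysis of $w_1u$ for (3)--(4). You also correctly isolate the only new ingredient, namely that the short coroot is $e_n^\vee = 2e_n^*-e_0^*$, so $\chi(e_n^\vee(\varpi)) = \alpha_0^{-1}\alpha_n^2$ replaces $\alpha_n^2$ in the coefficient of $f_{w_0}$.
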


According to \eqref{eq:cw} and \eqref{eq:Macdonald}, the expression
\begin{equation}\label{eq:term-nonsplit-beforesym}
\prod_{1 \leq i < j \leq n} \frac{1}{(1-\alpha_0^{-1}\alpha_i \alpha_j q^{-1})(1 - \alpha_i \alpha_j^{-1} q^{-1})} c_{w_1}(\chi) \sigma_{^{w_1}\chi}(\varpi^\delta)
\end{equation}
is a linear combination of rational functions of the form
\[
\alpha_1^{\pm \ell_{s(1)}} \cdots \alpha_n^{\pm \ell_{s(n)}}
\]
where $s$ is a permutation in $\mathfrak{S}_n$.
Then according to Theorem \ref{thm:FE-nonsplit}, it suffices to apply the \textit{symmetrizer} $\mathcal{S} = \sum_{w\in W} w $ to the term with $\alpha_1^{-\ell_1} \alpha_2^{-\ell_2}\cdots \alpha_n^{-\ell_n}$ in \eqref{eq:term-nonsplit-beforesym} to obtain $B_\delta$.
When $w = w_1$, we have
\[
\chi_i^\prime = 
\begin{cases}
\chi_i & {i=0 \text{ or } n},\\
\chi_0\chi_i^{-1} & {1 \leq i \leq n-1}.
\end{cases}
\]
Hence $c_{w_1}(\chi)$ equals
$$
\prod_{1 \leq i < j \leq n}\frac{(1-\alpha_0^{-1}\alpha_i\alpha_jq^{-1})(1-\alpha_i\alpha_j^{-1}q^{-1})}
{(1-\alpha_0^{-1}\alpha_i\alpha_j)(1-\alpha_i\alpha_j^{-1})} \prod_{i=1}^{n-1} \frac{1-\alpha_0^{-1}\alpha_i^2 q^{-1}}{1-\alpha_0^{-1}\alpha_i^2},
$$ 
and $\sigma_{^{w_1}\chi}(\varpi^\delta)$ equals
\begin{equation*}
\frac{q^{e_\delta}}{1+q^{-1}} \alpha_0^{\sum_{i=1}^{n-1} \ell_i} \alpha_1^{-\ell_1} \cdots \alpha_{n-1}^{-\ell_{n+1-i}}
\times \frac{(\alpha_n - \alpha_0\alpha_n^{-1}q^{-1}) \alpha_n^{\ell_n} - (\alpha_0\alpha_n^{-1} - \alpha_n q^{-1})\alpha_0^{\ell_n}\alpha_n^{-\ell_n}}{\alpha_n - \alpha_0\alpha_n^{-1}}.
\end{equation*}
We apply the symmetrize $\mathcal{S}$ to
\begin{align}\label{beforesymnon-split}
\Xi = \alpha_0^{\sum_{i=1}^n \ell_i} \alpha_1^{-\ell_1} \cdots \alpha_n^{-\ell_n} \prod_{1 \leq i < j \leq n}\frac{1}{(1-\alpha_0^{-1} \alpha_i \alpha_j)(1- \alpha_i \alpha_j^{-1})} \prod_{i=1}^{n} \frac{1-\alpha_0^{-1}\alpha_i^2 q^{-1}}{1 - \alpha_0^{-1} \alpha_i^2}
\end{align}
so that we obtain $B_\delta$.
Recalling \eqref{eq:Delta}, we have
\begin{align*}
\mathcal{S}(\Xi) &= \Delta^{-1} \calA(\Xi \Delta)\\
&= \frac{q^{e_\delta}}{1 + q^{-1}} \alpha_0^{\sum_{i=1}^n \ell_i}\alpha_0^{\frac{n(n+1)}{2}} \Delta^{-1}  \calA\left( (-1)^n \prod_{i=1}^n \alpha_{i}^{-\ell_{i} - i}(1-\alpha_0^{-1}\alpha_i^2 q^{-1})\right).
\end{align*}
Apply $w_0$ to the argument of $\calA$. We obtain
\begin{align*}
\frac{q^{e_\delta}}{1 + q^{-1}} \Delta^{-1} \calA\left(\prod_{i=1}^n \alpha_{i}^{\ell_{i} + i}(1-\alpha_0\alpha_i^{-2} q^{-1})\right).
\end{align*}
In particular, when $\ell_n = 0$, the argument of $\calA$ is
\begin{align*}
\alpha_n\prod_{i=1}^{n-1} \alpha_{i}^{\ell_{i} + i} - \alpha_0\alpha_n^{-1}q^{-1} \prod_{i=1}^{n-1} \alpha_{i}^{\ell_{i} + i}.
\end{align*}
Note that
\begin{equation}
\calA\left(-\alpha_0\alpha_n^{-1}q^{-1} \prod_{i=1}^{n-1} \alpha_{i}^{\ell_{i} + i}\right) = q^{-1} \calA\left(\alpha_n \prod_{i=1}^{n-1} \alpha_{i}^{\ell_{i} + i}\right).
\end{equation}
Hence the formula is simplified as
\begin{equation}
q^{e_\delta} \Delta^{-1} \calA\left(\prod_{i=1}^{n-1} \alpha_{i}^{\ell_{i} + i}(1-\alpha_0\alpha_i^{-2} q^{-1})\right).
\end{equation}
This completes the proof of Theorem \ref{thm:formula-nonsplit}.

\subsection{Proof of Theorem \ref{thm:formula-split}} 
Suppose $T$ is split. Let
\[
P_\delta(g) = \int_{N_0} \Phi_\chi(gn\varpi^\delta) dn,
\]
where $N_0 = N \cap K$ and $N$ is the unipotent radical subgroup of the Borel subgroup $B$ of $G$.

\begin{lemma}
$P_\delta \in \ind(\chi)^\calB$.
\end{lemma}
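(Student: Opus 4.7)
The plan is to reformulate $P_\delta$ as a spreading integral over the entire Iwahori subgroup $\calB$, so that right-$\calB$-invariance becomes an immediate consequence of the left-invariance of the Haar measure on $\calB$. The key tool is the Iwahori factorization $\calB = N_0 \cdot T_0 \cdot U_1^-$, where $U_1^-$ denotes the elements of $N^- \cap K$ that reduce to the identity modulo $\varpi$. The argument is carried out for dominant $\delta$, which is the case of interest in view of the vanishing of $B_\delta$ outside the dominant chamber.

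First I would establish the pointwise identity
\[
\Phi_\chi(g\, b\, \varpi^\delta) \;=\; \Phi_\chi(g\, n\, \varpi^\delta)
\qquad \text{for every } b = n\, t\, u^- \in N_0\, T_0\, U_1^-,
\]
using only the right $K$-invariance of $\Phi_\chi$. Writing $u^-\varpi^\delta = \varpi^\delta \cdot (\varpi^{-\delta} u^- \varpi^\delta)$, the key claim is $\varpi^{-\delta} u^- \varpi^\delta \in K$: conjugation by $\varpi^{-\delta}$ scales the parameter of each negative root subgroup $U_{-\alpha}$ by $\varpi^{\alpha(\delta)}$, and for dominant $\delta$ one has $\alpha(\delta) \geq 0$ for every positive root $\alpha$. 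Since the coordinates of $u^- \in U_1^-$ lie in $\varpi\calO$, the conjugate remains in $U_1^- \subset K$. The torus factor $t$ commutes with $\varpi^\delta$ and lies in $T_0 \subset K$, so it is absorbed by right $K$-invariance as well.

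Next I would verify the measure-theoretic identity
\[
\int_\calB F(n(b))\, db \;=\; c\, \int_{N_0} F(n)\, dn,
\]
where $n(b) \in N_0$ denotes the $N_0$-component of $b$ in its Iwahori factorization. Since $n(n_0 b) = n_0\, n(b)$ for $n_0 \in N_0$, the pushforward via $b \mapsto n(b)$ of the Haar measure on $\calB$ is a left $N_0$-invariant measure on $N_0$, and hence a positive scalar $c$ times the Haar measure on $N_0$. Applying this with $F(n) = \Phi_\chi(g\, n\, \varpi^\delta)$ and combining with the previous step yields
\[
c \cdot P_\delta(g) \;=\; \int_\calB \Phi_\chi(g\, b\, \varpi^\delta)\, db.
\]

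The right-hand side is manifestly right-$\calB$-invariant in $g$: for $b_0 \in \calB$, left-invariance of $db$ gives
\[
c \cdot P_\delta(g b_0) \;=\; \int_\calB \Phi_\chi(g\, b_0\, b\, \varpi^\delta)\, db \;=\; \int_\calB \Phi_\chi(g\, b'\, \varpi^\delta)\, db' \;=\; c \cdot P_\delta(g),
\]
so $P_\delta \in \ind(\chi)^\calB$. The crux of the argument is the conjugation estimate $\varpi^{-\delta} U_1^- \varpi^\delta \subset K$, which is precisely where the dominance of $\delta$ enters; this would be the only obstacle to extending the argument verbatim to non-dominant $\delta$, but is harmless in the present context.
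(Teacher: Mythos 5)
Your proposal is correct and is essentially the same argument as the paper's: both rewrite $P_\delta(g)$ as an integral over all of $\calB$ via the Iwahori factorization, absorbing the torus factor and the $\varpi^{-\delta}$-conjugate of the deep negative unipotent part into $K$ by right $K$-invariance of $\Phi_\chi$, whence $\calB$-invariance is immediate. Your version merely spells out the conjugation estimate (where dominance of $\delta$ enters) and the pushforward of Haar measure more explicitly than the paper does.
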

\begin{proof}
Clearly $P_\delta \in \ind(\chi)$. To show the right Iwahori invariance, we first recall the Iwahori factorization
\begin{equation}
    \calB = N_1^- A_0 N_0.
\end{equation}
Here we denote $N_0 = N \cap K$, $N_1 = N \cap \G(\mathfrak{p})$, $A_0 = A \cap K$ and $N_1^-$ the opposite subgroup of $N_1$. Normalizing the Haar measure on each compact group so that the volume of the group is one, we have
\begin{equation}
    \int_\calB \Phi_\chi(gk\varpi^\delta) dk = \int_{N_0} \int_{A_0} \int_{N_1^-} \Phi_\chi(gntn^-\varpi^\delta) dn^- dt dn.
\end{equation}
Since $(\varpi^\delta)^{-1} t n^- \varpi^\delta \in K$, the right-hand side is indeed $P_\delta(g)$, while the left-hand side is clearly $\calB$-invariant.
\end{proof}

Though we will ultimately use the Casselman--Shalika method, we first establish the following lemma, which implies that the determination of $B_\delta$ follows from the determination of $B(P_\delta)$. For convenience, we denote the normalizer in \eqref{equation:FEsplit} by
\[
\mathcal{N}_{\chi,\lambda} := \frac{\prod_{i=1}^{n}(1-\alpha_i \beta^{-1} q^{-\frac{1}{2}})(1- \alpha_0^{-1}\alpha_i \beta q^{-\frac{1}{2}})}{\prod_{1 \leq i < j \leq n}(1 - \alpha_0^{-1} \alpha_i \alpha_jq^{-1})(1 - \alpha_i \alpha_j^{-1}q^{-1}) \prod_{i=1}^n (1-\alpha_0^{-1} \alpha_i^2 q^{-1})}.
\]
Similar to \cite[Lemma 4.2]{BFF97}, we obtain the following lemma.
\begin{lemma}\label{lem:BH}
Let $\delta = (\ell_1,\ldots,\ell_n)$ be dominant.
\begin{enumerate}
    \item Suppose $\ell_n = 0$. Then $B_\delta = \mathcal{N}_{\chi,\lambda}^{-1}B(P_\delta)$.
    \item Suppose $\ell_n > 0$. Then
    \[
B_\delta = \frac{\mathcal{N}_{\chi,\lambda}^{-1}}{1-q^{-1}}(B(P_\delta) - q^{-1}B(P_{(\ell_1-1,\ell_2,\ldots,\ell_n)})).
    \]
\end{enumerate}\qed
\end{lemma}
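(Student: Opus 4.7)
The plan is to compute $B(P_\delta)$ by unfolding its integral definition and matching the result against $H_\chi(\tilde{\varpi}^\delta) = B(\pi(\tilde{\varpi}^\delta)\Phi_\chi)$, then convert to $B_\delta = \calH_\chi(\tilde{\varpi}^\delta)$ via the normalization $\calH_\chi = \mathcal{N}_{\chi,\lambda}H_\chi$ coming from Theorem~\ref{thm:FE-split}. The strategy closely follows \cite[Lemma~4.2]{BFF97}, with the additional $\GL_1$-factor of $\GSpin$ producing the $\alpha_0$-dependent pieces of $\mathcal{N}_{\chi,\lambda}$.

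First I would substitute the definitions of $B$ from \eqref{eq:bessel-split} and of $P_\delta$, and interchange the order of integration (justified by absolute convergence in the region \eqref{domainC_0prime}), obtaining
\[
B(P_\delta) = \int_{Z\mo T}\int_U \int_{N_0} \Phi_\chi(w_0 n(1) u t n \varpi^\delta)\, dn\, \theta_S^{-1}(tu)\, du\, dt.
\]
Next I would commute $n\in N_0$ past $\varpi^\delta$ using $tn\varpi^\delta = t\varpi^\delta(\varpi^{-\delta}n\varpi^\delta)$. For $\delta$ dominant, $\varpi^{-\delta}N_0\varpi^\delta$ decomposes by root subgroups: those positive roots $\alpha$ with $\alpha(\delta)=0$ remain in $K$ and contribute only a volume factor, while those with $\alpha(\delta)>0$ require moving the $\varpi^{-\alpha(\delta)}$-scaled root vector to the left of $w_0 n(1)$ via a Bruhat-type decomposition and applying the left $B$-equivariance of $\Phi_\chi$; the accumulated Euler factors collect into precisely $\mathcal{N}_{\chi,\lambda}$.

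In the case $\ell_n=0$, where $\tilde{\varpi}^\delta = \varpi^\delta$, this computation directly yields $B(P_\delta) = \mathcal{N}_{\chi,\lambda}\,\calH_\chi(\varpi^\delta) = \mathcal{N}_{\chi,\lambda}\, B_\delta$. In the case $\ell_n>0$, the extra $n(1)$ in $\tilde{\varpi}^\delta = n(1)\varpi^\delta$ must be combined with the $n(1)$ in the Bessel functional, forcing an additional splitting of one root-subgroup integration into its $\calO$-part (contributing the main term $B(P_\delta)$) and its complement (which, after a change of variable and a geometric-series summation, contributes $q^{-1}B(P_{(\ell_1-1,\ldots,\ell_n)})$); the $(1-q^{-1})^{-1}$ prefactor absorbs the geometric sum. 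The main obstacle will be the precise root-by-root bookkeeping: showing that the Euler factors collected by conjugating $N_0$ past $\varpi^\delta$---in particular the $\alpha_0$-dependent Waldspurger-type terms from the $\GSpin_3 \cong \GL_2$ Levi (as in the proof of Theorem~\ref{thm:FE-split})---multiply to exactly $\mathcal{N}_{\chi,\lambda}$, and in the case $\ell_n>0$ pinpointing the specific root subgroup whose splitting produces the $(\ell_1-1,\ldots,\ell_n)$ shift.
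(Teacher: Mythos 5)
There is a genuine gap: the mechanism you propose for evaluating $B(P_\delta)$ is not the one that makes this lemma work, and as described it would not go through. The heart of the lemma is the identity
\[
B(P_\delta)=\int_{\calO}H_\chi\bigl(n(x)\varpi^\delta\bigr)\,dx,
\]
which the paper records just before concluding the proof of Theorem \ref{thm:formula-split}. This identity is obtained by writing $N_0=U_0\cdot\{n(x):x\in\calO\}$ and absorbing the $U_0$-factor into the $U$-integration of the Bessel functional by a change of variables (using that $T$ normalizes $U$, that $\theta_S$ is $T$-conjugation invariant, and that $\theta_S$ is trivial on $U_0$ because $\psi$ is unramified) --- \emph{not} by conjugating $N_0$ past $\varpi^\delta$. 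Your conjugation step fails at the outset: for dominant $\delta$ and a positive root $\alpha$, the root coordinate of $\varpi^{-\delta}x_\alpha(c)\varpi^\delta$ is $\varpi^{-\langle\alpha,\delta\rangle}c$, so $\varpi^{-\delta}N_0\varpi^\delta$ \emph{expands} rather than lands in $K$, and pushing those large unipotent elements back through $w_0 n(1)ut$ would amount to redoing the entire intertwining-operator computation rather than proving this lemma. Moreover, no Euler factors are ``accumulated'' here: $\mathcal{N}_{\chi,\lambda}$ enters only through the normalization $\calH_\chi=\mathcal{N}_{\chi,\lambda}H_\chi$ of Theorem \ref{thm:FE-split}, and asserting that a root-by-root bookkeeping will separate $B(P_\delta)$ into exactly $\mathcal{N}_{\chi,\lambda}^{\pm1}$ times $B_\delta$ presupposes the answer.

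Once the displayed identity is in hand, the rest is a valuation analysis of the $x$-integral, which your sketch of part (2) does not capture. For $v(x)\ge\ell_n$ one has $\varpi^{-\delta}n(x)\varpi^\delta\in K$, so $H_\chi(n(x)\varpi^\delta)=H_\chi(\varpi^\delta)$; for $v(x)=m<\ell_n$ one writes $n(x)=e_n^*(x)\,n(1)\,e_n^*(x)^{-1}$ and uses the left $\lambda$-equivariance and right $K$-invariance of $H_\chi$ to obtain $\beta^{m}H_\chi\bigl(n(1)\varpi^{(\ell_1,\dots,\ell_{n-1},\ell_n-m)}\bigr)$. Summing over $m$ and subtracting the analogous expansion for the shifted exponent vector telescopes to $(1-q^{-1})H_\chi(\tilde{\varpi}^\delta)$; this is where the prefactor $(1-q^{-1})^{-1}$, the factor $q^{-1}$ in the recursion, and the decrement of the coordinate $\ell_n$ all originate. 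It is the decomposition of $\calO$ by valuation together with the $T$-conjugation of $n(x)$ into $n(1)$ --- not a ``combination of the two $n(1)$'s'' or a splitting of a root-subgroup integration --- that produces the recursion. I recommend reworking the argument along these lines, following \cite[Lemma 4.2]{BFF97}.
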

We now write $P_\delta$ in terms of the Casselman basis $\{f_w\}$:
\[
P_\delta = \sum_{w \in W} S(\delta,w;\chi)f_w.
\]
The coefficients $S(\delta,w;\chi)$ are given by
\begin{align*}
S(\delta,w;\chi) 
&=\int_{N_w \mo N} \int_{N_0} \Phi_\chi(w^{-1}n n_0 \varpi^\delta) dn_0 dn\\
&=\int_{N_w \mo N} \Phi_\chi(w^{-1} n \varpi^\delta) dn\\
&= c_w(\chi) ({^w \chi }\delta^{\frac{1}{2}}_B)(\varpi^\delta).
\end{align*}
In particular, when $w = w_0$ we have
\begin{align*}
c_{w}(\chi) &= \prod_{r \in R^+} \frac{1-q^{-1}\chi(a_r)}{1-\chi(a_r)}\\
&= \prod_{1 \leq i < j \leq n}\frac{(1-\alpha_0^{-1}\alpha_i \alpha_j q^{-1})(1- \alpha_i \alpha_j^{-1} q^{-1})}{(1 - \alpha_0^{-1} \alpha_i \alpha_j)(1 - \alpha_i \alpha_j^{-1})} \prod_{i = 1}^n \frac{1-\alpha_0^{-1}\alpha_i^2 q^{-1}}{1-\alpha_0^{-1} \alpha_i^2}
\end{align*}
and
\begin{equation}\label{eq:monomial-split-presym}
{^w\chi}(\varpi^\delta) = \chi(w^{-1} \varpi^\delta w) = \alpha_0^{\sum_{i=1}^n \ell_i} \alpha_1^{-\ell_1} \cdots \alpha_n^{-\ell_n}.
\end{equation}
Since no other Weyl group element contributes to terms linearly dependent on the monomial \eqref{eq:monomial-split-presym}, it only remains to determine $B(f_{w_0})$. 

\begin{lemma}
Suppose $|\alpha_0^{-1} \alpha_n \beta|< q^{\frac{1}{2}}$. Then
    \[
B(f_{w_0}) = \left(1- \alpha_0^{-1} \alpha_n \beta q^{-\frac{1}{2}} \right)^{-1}.
    \]
\end{lemma}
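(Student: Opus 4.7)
The plan is to compute
\[
B(f_{w_0}) = \int_{Z\mo T}\int_U f_{w_0}(w_0 n(1) u t)\, \theta_S^{-1}(tu)\,du\,dt
\]
by exploiting the Bruhat--Iwahori support of $f_{w_0}$, paralleling \cite[Lemma 4.2]{BFF97}. As in the non-split case, $f_{w_0} = \phi_{w_0}$, so
\[
f_{w_0}(b w_0^{-1} b_1) = \delta_B^{1/2}(b)\,\chi(b)\quad \text{for } b \in B,\, b_1 \in \calB,
\]
and $f_{w_0}$ vanishes outside $B w_0^{-1} \calB$.

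First I would reduce the inner $U$-integral to an evaluation on the torus. Parametrize $t$ modulo $Z$ by $e_n^*(a_n)$ with $a_n \in F^\times$, so $\lambda^{-1}(t) = \beta^{-v(a_n)}$. The substitution $u \mapsto e_n^*(a_n)\, u\, e_n^*(a_n)^{-1}$ is measure-preserving on $U$, since the scalings by $a_n$ and $a_n^{-1}$ on the root subgroups $U_{e_i - e_n}$ and $U_{e_i + e_n}$ (for $i<n$) cancel in pairs, while the other root subgroups of $U$ are fixed. Combined with the $T$-invariance of $\theta_S$, this rewrites the inner integral as
\[
\int_U f_{w_0}(w_0 n(1) e_n^*(a_n) u)\,\theta_S^{-1}(u)\,du.
\]
Using right-$\calB$-invariance of $f_{w_0}$, the triviality of $\theta_S$ on $U \cap \calB$, and a Bruhat--Iwahori analysis analogous to \cite[pp.~152--154]{BFF97}, this integral collapses to $f_{w_0}(w_0 n(1) e_n^*(a_n))$.

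Next I would compute $f_{w_0}(w_0 n(1) e_n^*(a_n))$ via the accidental isomorphism $\iota:\GL_2\xrightarrow{\sim}\GSpin_3$ of Section \ref{subsec:acc-iso}. Under $\iota^{-1}$, the element $w_0 n(1) e_n^*(a_n)$ corresponds in the $\GSpin_3$-factor to the $\GL_2$-matrix $\left(\begin{smallmatrix}0 & 1\\ a_n & 1\end{smallmatrix}\right)$, which admits the Bruhat decomposition
\[
\begin{pmatrix}0 & 1\\ a_n & 1\end{pmatrix} = \diag(1,a_n)\cdot w_0 \cdot \begin{pmatrix}1 & a_n^{-1}\\ 0 & 1\end{pmatrix}.
\]
The rightmost factor lies in $\calB$ iff $v(a_n) \leq 0$; on this support, $\diag(1,a_n)$ pulls back to $e_0^*(a_n)e_n^*(a_n^{-1})$, and plugging into $f_{w_0}(b w_0^{-1} b_1) = \delta_B^{1/2}(b)\chi(b)$ yields $f_{w_0}(w_0 n(1) e_n^*(a_n)) = (\alpha_0\alpha_n^{-1} q^{1/2})^{v(a_n)}$.

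Finally, the outer integral against $\lambda^{-1}(e_n^*(a_n)) = \beta^{-v(a_n)}$ collapses to the geometric series
\[
B(f_{w_0}) = \sum_{k \geq 0}\bigl(\alpha_0^{-1}\alpha_n\beta q^{-1/2}\bigr)^k = \bigl(1 - \alpha_0^{-1}\alpha_n\beta q^{-1/2}\bigr)^{-1},
\]
which converges precisely under the hypothesis $|\alpha_0^{-1}\alpha_n\beta| < q^{1/2}$. The main obstacle will be justifying the reduction of the inner $U$-integral in the second paragraph, i.e., showing rigorously that the Bruhat--Iwahori support of $f_{w_0}$ together with the unramifiedness of $\theta_S$ reduce the multi-variable $U$-integral to the single evaluation $f_{w_0}(w_0 n(1) e_n^*(a_n))$; this closely mirrors the more intricate bookkeeping for $\SO_{2n+1}$ in \cite{BFF97}, with the additional $\GL_1$-factor contributing only through the appearance of $\alpha_0$ in the final answer.
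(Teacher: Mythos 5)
Your proposal is correct and follows essentially the same route as the paper: reduce to $\int_{|a|_F\geq 1} f_{w_0}(w_0 n(1)e_n^*(a))\lambda^{-1}(a)\,d^\times a$ by a support analysis that forces $u\in U_0$ and $|a|_F\geq 1$, evaluate $f_{w_0}(w_0 n(1)e_n^*(a))=\chi\delta_B^{1/2}(e_0^*(a)e_n^*(a^{-1}))$, and sum the geometric series. The step you flag as the main obstacle is dispatched in the paper by the one-line observation that $w_0 n(1)e_n^*(a)u\in Bw_0\calB$ forces $(e_n^*(a)^{-1}n(1)e_n^*(a))u\in w_0Bw_0\calB$, hence $u\in U_0$ and $e_n^*(a)^{-1}n(1)e_n^*(a)=n(a^{-1})\in K$; your $\GL_2$ Bruhat factorization through the accidental isomorphism is the same computation, since the factor $\left(\begin{smallmatrix}1&a^{-1}\\0&1\end{smallmatrix}\right)$ is exactly $n(a^{-1})$.
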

\begin{proof}
Suppose $|\alpha_0^{-1} \alpha_n \beta|< q^{\frac{1}{2}}$. Note that the action of $A$ by conjugation on $N$ factors through the projection $\proj|_A$. Since $T$ normalizes $U$ and fixes $\theta_S$,
\[
B(f_{w_0}) = \int_{F^\times} \int_U f_{w_0}(w_0 n(1) e_n^*(a) u) \theta_S(u)^{-1} \lambda^{-1}(a) du d^\times a.
\]
Suppose that $w_0 n(1) e_n^*(a) u \in B w_0 \calB$. Then $(e_n^*(a^{-1}) n(1) e_n^*(a))u \in w_0 B w_0 \calB$. This relation implies that $u \in U_0$ and that $e_n^*(a^{-1}) n(1) e_n^*(a) \in K$, hence $|a|_F \geq 1$.
Thus,
\[
B(f_{w_0}) = \int_{|a|_F \geq 1} f_{w_0}(w_0n(1)e_n^*(a))\lambda^{-1}(a) d^\times a.
\]
Note that 
$$w_0 n(1) e_n^*(a) = w_0 e_n^*(a) e_n^*(a)^{-1} n(1) e_n^*(a)$$ and that the factor $e_n^*(a)^{-1} n(1) e_n^*(a)$ lies in $\calB$ for $|a|_F \geq 1$. By the right $\calB$-invariance of $f_{w_0} = \phi_{w_0}$, we have
\begin{align*}
    f_{w_0}(w_0n(1)e_n^*(a)) &= f_{w_0}(w_0 e_n^*(a) w_0^{-1}w_0)\\
    &= \chi\delta_B^{\frac{1}{2}}\left(e_0^*\left(a\right)e_n^*\left(a^{-1}\right)\right).
\end{align*}
Hence
\begin{align*}
B(f_{w_0}) &= \sum_{m=0}^\infty \alpha_0^{-m}\alpha_n^m \beta^m q^{-\frac{m}{2}} \\ &=\left(1-\alpha_0^{-1} \alpha_n \beta q^{-\frac{1}{2}}\right)^{-1}
\end{align*}
when $|\alpha_0^{-1} \alpha_n \beta|< q^{\frac{1}{2}}$.
\end{proof}

To conclude the proof of Theorem \ref{thm:formula-split}, we first observe that
\[
B(P_\delta) = \int_\calO H_\chi\left(n(x)\varpi^\delta\right) dx.
\]
Hence $B(P_\delta)$ satisfies the same functional equations with $H\left(\varpi^\delta\right)$ introduced in Theorem \ref{thm:FE-split}, i.e. $\mathcal{N}_{\chi,\lambda}^{-1}B(P_\delta)$ is invariant under the $W$-action. Similar to the non-split case, to obtain $\mathcal{N}_{\chi,\lambda}^{-1}B(P_\delta)$, we apply the symmetrize $\mathcal{S} = \sum_{w \in W} w$ to
\begin{multline*}
\frac{\prod_{i=1}^{n-1}(1-\alpha_i \beta^{-1} q^{-\frac{1}{2}})(1- \alpha_0^{-1}\alpha_i \beta q^{-\frac{1}{2}})}{\prod_{1 \leq i < j \leq n}(1 - \alpha_0^{-1} \alpha_i \alpha_j)(1 - \alpha_i \alpha_j^{-1}) \prod_{i=1}^n (1-\alpha_0^{-1} \alpha_i^2)} \\ \times q^{e_\delta} \alpha_0^{\sum_{i=1}^{n} \ell_i} \alpha_1^{-\ell_1} \cdots \alpha_n^{-\ell_n} (1- \alpha_n \beta^{-1} q^{-\frac{1}{2}}).
\end{multline*}
By a similar argument, we obtain
\begin{multline*}
q^{e_\delta} \alpha_0^{\sum_{i=1}^n \ell_i} \alpha_0^{\frac{n(n+1)}{2}} \Delta^{-1} \\
\times \calA \left((-1)^n (1- \alpha_n \beta^{-1} q^{-\frac{1}{2}}) \alpha_1^{-\ell_1 - n} \prod_{i=2}^{n} \alpha_{i}^{-\ell_i - (n+1-i)}(1-\alpha_i \beta^{-1} q^{-\frac{1}{2}})(1- \alpha_0^{-1}\alpha_i \beta q^{-\frac{1}{2}}) \right).
\end{multline*}
Applying Lemma \ref{lem:BH}, we obtain theorem \ref{thm:formula-split} with no difficulty. (Note that when $\ell_n = 0$, we obtain in fact
\begin{multline*}
B_\delta =  q^{e_\delta} \alpha_0^{\sum_{i=1}^n \ell_i} \alpha_0^{\frac{n(n+1)}{2}} \Delta^{-1} \\
\times \calA \left( (-1)^n \alpha_n^{-1} (1- \alpha_n \beta^{-1} q^{-\frac{1}{2}})  \prod_{i=1}^{n-1} \alpha_{i}^{-\ell_i - i}(1-\alpha_i \beta^{-1} q^{-\frac{1}{2}})(1- \alpha_0^{-1}\alpha_i \beta q^{-\frac{1}{2}}) \right).
\end{multline*}
Apply the action of $w_0$ to the argument of the alternator,
\[
B_\delta = q^{e_\delta} \Delta^{-1} \calA \left( \alpha_n (1- \alpha_0\alpha_n^{-1} \beta^{-1} q^{-\frac{1}{2}})  \prod_{i=1}^{n-1} \alpha_{i}^{\ell_i + i}(1-\alpha_0\alpha_i^{-1} \beta^{-1} q^{-\frac{1}{2}})(1- \alpha_i^{-1} \beta q^{-\frac{1}{2}}) \right).
\]
However, expanding the second factor, the term
\[
\alpha_n(- \alpha_0 \alpha_n^{-1} \beta^{-1} q^{-\frac{1}{2}}) \prod_{i=1}^{n-1} \alpha_{i}^{\ell_i + i}(1-\alpha_0\alpha_i^{-1} \beta^{-1} q^{-\frac{1}{2}})(1- \alpha_i^{-1} \beta q^{-\frac{1}{2}})
\]
is independent of $\alpha_n$, and hence its alternator is zero. Thus one is in fact left with \eqref{eq:k1-special}. A similar argument shows that \eqref{eq:k1-special} is equal to formula \eqref{eq:k1-general} when $\ell_n = 0$.)

\section{A Rankin--Selberg integral}\label{sec:RS-integral}
In \cite[Appendix]{BFF97}, Furusawa gave a Rankin--Selberg integral of the $L$-function for cuspidal automorphic representations of $\SO_{2n+1} \times \GL_n$, factorized the integral into an Euler product of local integrals and computed the local factors at good places. In this section, we will extend that construction to $\GSpin_{2n+1} \times \GL_n$. In particular, we will place emphasis on the local unramified computations where the explicit formulas are applied.

Let $k$ be a number field. 
For a place $v$, we let $k_v$ denote its completion at $v$. When $v$ is a finite place, we let $\calO_v$ be the ring of integers of $k_v$ and fix a uniformizer $\varpi_v$ in $\calO_v$. We also let $q_v$ be the cardinality of the residue field $\calO_v / \varpi_v \calO_v$.

Let $n \geq 1$. For $\alpha \in k^\times$, let $(V,q_V)$ be a $2n+2$-dimensional quadratic space where there is a basis $(e_1,\ldots,e_n,f_0,f_1,e_{-n},\ldots,e_{-1})$ with respect to which
the matrix of $q_V$ is
\[
\begin{pmatrix}
&&&J_n\\
&1&&\\
&&-\alpha&\\
J_n&&&\\
\end{pmatrix}.
\]

Let $H_\alpha = \GSpin(V_\alpha)$, which is a quasi-split group (note that the Witt index of $V_\alpha$ is at least $n$). 
Let $W$ be the orthogonal complement of $f_1$ in $V$, so that $\dim W = 2n+1$ and the Witt index of $W$ is $n$. Indeed, the matrix of $q_\alpha|_W$ with respect to the basis $(e_1,\ldots,e_n,f_0,e_{-n},\ldots,e_{-1})$ is $J_{2n+1}$. Let $G = \GSpin(W)$, the split form defined in Section \ref{subsec:root-datum} through its root datum. For background on $\GSpin$ from the viewpoint of quadratic spaces, see \cite{Bourbaki}, and for explicit computation of their root data, see \cite{Asg02}. The embedding of $W$ into $V$ via inclusion realizes $G$ as a subgroup of $H_\alpha$. 

Note that in this section we use italic letters, say $G$, for algebraic groups over $k$, and the same notation for their base change to $k_v$ when there is no risk of confusion. We will always specify when referring to the group of points.

Let $\A$ be the ring of adeles of $k$. Let $\pi$ be an irreducible cuspidal automorphic representation of $G_{2n+1}(\A)$ and $V_\pi$ be its space of cusp forms. We denote the central character of $\pi$ by $\chi_0$. Let $P$ be the standard parabolic subgroup with Levi factor $\GL_n \times T_\alpha$ where $T_\alpha \cong \GSpin\pmat{1}{}{}{-\alpha}$.

Let $\sigma = \bigotimes^\prime_v \sigma_v$ be an irreducible cuspidal automorphic representation of $\GL_n(\A)$ and $\lambda = \bigotimes^\prime_v \lambda_v$ be a character of $T_\alpha(k) \mo T_\alpha(\A)$ whose restriction to $Z$ is $\chi_0$.
Let $f_s \in \ind_{P(\A)}^{H_\alpha(\A)}((\sigma \otimes |\det|^s) \times \lambda)$ and let
\[
E(h,s,f_s) = L(s+1,\sigma \times \lambda) L(2s+1,\sigma,\wedge^2 \otimes \chi_0) \sum_{\gamma \in P(k) \mo H_\alpha(k)} f_s(\gamma h)
\]
be the normalized Eisenstein series attached to $f_s$. We consider the Rankin--Selberg integral given by
\begin{equation}\label{equation:RSint}
    Z(s,\phi,f_s) = \int_{G(k)\mo G(\A)} E(s,g,f_s) \phi(g) dg
\end{equation}
where $\phi \in V_\pi$.
\begin{theorem}\label{thm:appendix-main}
Suppose that the Bessel period determined by $\alpha$ and $\lambda$ does not vanish identically on $V_\pi$. (The condition will be made precise as \eqref{eq:A5-BFF} below.) Then the integral \eqref{equation:RSint} is an Euler product and its local factor at a good place $v$ equals
\begin{equation}\label{eq:appendix-main}
Z_v(s) = {L(s+1/2,\pi_v \times \sigma_v)}.
\end{equation}
The precise condition for a place $v$ to be good will be given later.
\end{theorem}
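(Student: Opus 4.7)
The strategy is to adapt Furusawa's argument in the appendix of \cite{BFF97} from $\SO_{2n+1}\times\GL_n$ to $\GSpin_{2n+1}\times\GL_n$, replacing the use of the orthogonal formula of \cite[Theorems 1.5, 1.6]{BFF97} by our Theorems \ref{thm:formula-nonsplit} and \ref{thm:formula-split} at the final local step. The proof splits naturally into three stages: a global unfolding, an Euler factorization, and an unramified local computation.

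\textbf{Global unfolding.} I first analyze the double cosets $P(k)\mo H_\alpha(k)/G(k)$. Projecting to $\SO$ via $\proj$ (cf.\ \eqref{equation:projection}), the double coset space is naturally identified with its orthogonal analogue studied in \cite{BFF97}, so the same representatives carry over. Exactly one double coset contributes a nonzero term after integration against the cusp form $\phi\in V_\pi$: its stabilizer in $G$ is the Bessel subgroup $R=T\ltimes U$ associated with $(S,\alpha)$. The remaining orbits are eliminated by cuspidality in a standard manner. This yields, for a well-chosen representative $\eta$,
\[
Z(s,\phi,f_s)\;=\;L^{\mathrm{norm}}(s)\int_{R(\A)\mo G(\A)} \mathcal{B}_\phi^\lambda(g)\, f_s(\eta g)\,dg,
\]
where $L^{\mathrm{norm}}(s)=L(s+1,\sigma\times\lambda)L(2s+1,\sigma,\wedge^2\otimes\chi_0)$ and $\mathcal{B}_\phi^\lambda$ is the global Bessel period of $\phi$ against $\theta_S\otimes\lambda^{-1}$. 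The hypothesis \eqref{eq:A5-BFF} is precisely the statement that this period does not vanish identically on $V_\pi$.

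\textbf{Euler factorization and the good-place assumption.} Writing $\phi=\otimes_v\phi_v$, $f_s=\otimes_v f_{s,v}$, and the Bessel period as a product via uniqueness of local Bessel models (Yan \cite{Yan25} together with \cite{KLL23}), the unfolded integral factorizes as $\prod_v Z_v(s)$. I will call a finite place $v$ \emph{good} if it is non-dyadic, $\pi_v$, $\sigma_v$, $\lambda_v$ are unramified, $\alpha\in\calO_v^\times$, the conductor of $\psi_v$ is $\calO_v$, and the local sections $\phi_v$, $f_{s,v}$ are the $K_v$-fixed normalized vectors. At such a place the local Bessel period coincides with the normalized unramified Bessel functional studied in Section \ref{sec:explicit-formula}, so $\mathcal{B}_{\phi_v}^{\lambda_v}(\pi_v(g)\phi_v)=\mathcal{H}_{\chi_v}(g)$.

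\textbf{Unramified local computation.} At a good place, I apply Iwasawa decomposition with respect to a suitable parabolic and reduce $Z_v(s)$ to a sum over dominant $\delta=(\ell_1,\ldots,\ell_n)$:
\[
Z_v(s)\;=\;\sum_{\delta\ \text{dominant}} \mathcal{H}_{\chi_v}(\widetilde{\varpi}^\delta)\, \widetilde{f}_{s,v}(\widetilde{\varpi}^\delta)\, \delta_{B_G}(\widetilde{\varpi}^\delta)^{-1},
\]
where $\widetilde{f}_{s,v}$ is the explicit value of the induced-from-$P$ spherical section along the torus, computed by Macdonald's formula for $\GL_n\times T_\alpha$ (involving the Satake parameters $\mu_1,\ldots,\mu_n$ of $\sigma_v$ and $\beta$ of $\lambda_v$). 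Substituting Theorem \ref{thm:formula-nonsplit} or \ref{thm:formula-split} for $\mathcal{H}_{\chi_v}(\widetilde{\varpi}^\delta)$ converts the sum into a sum of the form $\Delta^{-1}\calA(\cdots)$. Using the standard trick of replacing $\sum_\delta \calA(\cdots)$ by $\calA(\sum_\delta \cdots)$ and summing the resulting geometric series in each $\alpha_i$, the sum collapses to a Weyl-type product; a Cauchy-identity manipulation (entirely analogous to \cite[Appendix]{BFF97}, but carrying the extra central parameter $\alpha_0$ and, in the split case, $\beta$) identifies the result with
\[
\frac{L(s+\tfrac12,\pi_v\times\sigma_v)}{L(s+1,\sigma_v\times\lambda_v)L(2s+1,\sigma_v,\wedge^2\otimes\chi_{0,v})}.
\]
Multiplying by the normalizing $L$-factor $L^{\mathrm{norm}}_v(s)$ built into the Eisenstein series gives \eqref{eq:appendix-main}.

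\textbf{Main obstacle.} The only substantive difficulty lies in the last step: tracking the $\alpha_0$ (and in the split case $\beta$) dependence through the Cauchy-type identity, so that the normalizing denominators appearing in Theorems \ref{thm:FE-nonsplit} and \ref{thm:FE-split} combine cleanly with the spherical values $\widetilde{f}_{s,v}(\widetilde{\varpi}^\delta)$ to match exactly the two normalizing $L$-factors of the Eisenstein series. Because the structural shape of Theorems \ref{thm:formula-nonsplit} and \ref{thm:formula-split} reduces to the orthogonal formula of \cite{BFF97} upon setting $\alpha_0=1$, no new combinatorial identity is needed; the argument is a $W$-equivariant refinement of Furusawa's with $\alpha_0$ recorded as a free parameter throughout.
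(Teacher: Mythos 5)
Your proposal follows essentially the same route as the paper: unfold \`a la Furusawa's appendix, factor the Bessel period via Yan's uniqueness of local Bessel models, reduce the good-place integral to a sum over dominant $\delta$ via the Iwasawa/Cartan decompositions, substitute Theorems \ref{thm:formula-nonsplit}/\ref{thm:formula-split} together with the explicit formula for the $\GL_n$-factor, and finish with the index-shifting (Cauchy-type) combinatorial identity carrying $\alpha_0$ (and $\beta$) as free parameters. The one small slip is that the unfolded Eisenstein datum at a good place is the unramified \emph{Whittaker} function of $\sigma_v$, evaluated by the Shintani/Casselman--Shalika formula as the Schur character $X_\delta^{\GL_n}(t_{\sigma,v})$, rather than a Macdonald spherical function; this does not affect the structure of the argument.
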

\begin{proof}
We first unfold the integral \eqref{equation:RSint}. By similar arguments to those in \cite[Appendix]{BFF97}, we have
\begin{equation}
Z(s,\phi,f_s) = L(s+1,\sigma \times \lambda)L(2s+1,\sigma,\wedge^2 \otimes \chi_0)\int_{R(\A)\mo G(\A)}W(\xi g,f_s)B_\phi(g) dg
\end{equation}
where
\[
\xi = \begin{pmatrix}
I_{n-1} & & & & & \\
&1&&&&\\
&&1&&&\\
&-1&&1&&\\
&\frac{\alpha}{2}&&-\alpha&1&\\
&&&&&I_{n-1}
\end{pmatrix}
\]
and where
\begin{equation}
B_\phi(g) = \int_{R(F) \mo R(\A)} \phi(rg) \theta_S^{-1}(r) dr.
\end{equation}
Now our principal assumption on $\pi$ is that
\begin{equation}\label{eq:A5-BFF}
B_\phi \neq 0 \text{ for some } \phi \in V_\pi.
\end{equation}
We let
\[
W(h,f_s) = \prod_v W_{s,v}(h_v).
\]
In a recent work, Yan \cite{Yan25} established the uniqueness of Bessel models over local fields of characteristic zero. As a consequence, one obtains the factorization
\[
B_\phi(g) = \prod_v B_v(g_v).
\]
Hence the Rankin--Selberg integral \eqref{equation:RSint} factorizes into an Euler product of local factors 
\begin{equation}\label{eq:A6-BFF}
Z_v(s) = L(s+1,\sigma_v \times \lambda_v)L(2s+1,\sigma_v,\wedge^2 \otimes \chi_0)\int_{R(F_v)\mo G(F_v)} W_{s,v}(\xi g) B_v(g) dg.
\end{equation}

Now we turn to the computation of the local factor at a good place $v$. More precisely, suppose that the finite place $v$ does not ramify in $F(\sqrt{\alpha})$ and $\frac{\alpha}{2}$ is a unit in $\calO_v$, the ring of integers in $F_v$. We compute \eqref{eq:A6-BFF} for unramified principal series representations $\pi_v,\sigma_v$ and an unramified character $\lambda_v$, under the assumption that $W_{s,v}$ and $B_v$ are the spherical vectors normalized so that $W_{s,v}(1) = 1$ and $B_v(1) = 1$. 

For a vector of integers $\delta = (\ell_1,\ldots,\ell_n)$, we denote 
$$\varpi^{\delta}_v = \prod_{i=1}^n e_i^*(\varpi_v^{\ell_i}).$$
By abuse of notation, we denote also by $\varpi^{\delta}_v$ a diagonal element of $\GL_n(F_v)$, $\diag(\varpi_v^{\ell_1},\ldots,\varpi_v^{\ell_n})$, when there is no fear of confusion.

By a similar argument as that in \cite[(3.4-5)]{Fur93}, using the Iwasawa decomposition and the Cartan decomposition, we have the integral over $R(F_v) \mo G(F_v)$ in \eqref{eq:A6-BFF} equals
\begin{multline}\label{eq:A7-BFF-before-nonsplit}
\sum_{\substack{\delta = (\ell_1,\ldots,\ell_{n-1},0) \\ \ell_1 \geq \cdots \geq \ell_{n-1} \geq 0}} q_v^{\sum_{i=1}^{n-1} \ell_i(2n+1-2i)} W_{s,v}(\varpi_v^\delta)B_v(\varpi_v^\delta)\\
+ (1 + q_v^{-1}) \sum_{\substack{\delta = (\ell_1,\ldots,\ell_{n}) \\ \ell_1 \geq \cdots \geq \ell_n > 0}} q_v^{\sum_{i=1}^{n} \ell_i(2n+1-2i)} W_{s,v}(\varpi_v^\delta)B_v(\varpi_v^\delta)
\end{multline}
when $v$ is inert, and equals
\begin{multline}\label{eq:A7-BFF-before-split}
\sum_{\substack{\delta = (\ell_1,\ldots,\ell_{n-1},0) \\ \ell_1 \geq \cdots \geq \ell_{n-1} \geq 0}} q_v^{\sum_{i=1}^{n-1} \ell_i(2n+1-2i)} W_{s,v}(\varpi_v^\delta)B_v(\varpi_v^\delta)\\
+ (1 - q_v^{-1}) \sum_{\substack{\delta = (\ell_1,\ldots,\ell_{n}) \\ \ell_1 \geq \cdots \geq \ell_n > 0}} q_v^{\sum_{i=1}^n \ell_i(2n+1-2i)} W_{s,v}(\varpi_v^\delta)B_v(n(1)\varpi_v^\delta)
\end{multline}
when $v$ is split. We recall that $n(1)$ is defined in \eqref{eq:nx-matrix}.

Note $\delta_P(\iota_\alpha(\varpi_v^\delta)) =|\det(\varpi_v^\delta)|^{n+1}$,
where $\delta_P$ is the modulus character of the parabolic subgroup $P(F_v)$ of $H_\alpha(F_v)$, so
\[
W_{s,v}(\varpi_v^\delta) = q_v^{-(s+\frac{n+1}{2}) \sum_{i=1}^n \ell_i} W_v^\sigma(\varpi_v^\delta),
\]
where $W^\sigma$ is the spherical Whittaker function of the unramified principal series $\sigma_v$, normalized as $W^\sigma(1) = 1$. We further apply the Casselman--Shalika formula \cite{Shi76,CS80}
\begin{align}\label{eq:CS-formula-in-character}
W_v^\sigma(\varpi_v^\delta) &= \delta_{B_{\GL_n}}^{\frac{1}{2}}(\varpi_v^\delta) X_\delta^{\GL_n}(t_{\sigma,v})
\end{align}
and Theorems \ref{thm:formula-nonsplit} and \ref{thm:formula-split}
\begin{align}\label{eq:my-formula-in-Sdelta}
B_v^\pi(\varpi_v^\delta) &= \delta_B^{\frac{1}{2}}(\varpi_v^\delta) S_\delta(t_{\pi,v},t_{\lambda,v}),
\end{align}
where:
\begin{itemize}
\item $t_{\sigma,v}$ is the Satake parameter of $\sigma_v$;
\item for $m \geq 1$, $\delta_{B_{\GL_m}}$ denotes the modulus character of the Borel subgroup $B_{\GL_m}$ of $\GL_m$;
\item for $m \geq 1$ and an integral vector $\kappa = (t_1,\ldots,t_m)$ with $t_1 \geq \cdots \geq t_m$, let $X_\kappa^{\GL_m}$ denote the character of the irreducible finite-dimensional holomorphic representation of $\GL_m(\C)$ of highest weight $\kappa$; 
\item $t_{\pi,v} = \diag(\alpha_1,\ldots,\alpha_n,\alpha_0\alpha_n^{-1},\ldots,\alpha_0\alpha_1^{-1})$ and $t_{\lambda,v} = \diag(z_1,z_2)$ are the Satake parameters of $\pi_v$ and $\lambda_v$, respectively, and
\[
S_\delta(t_{\pi,v},t_{\lambda,v}) :=
   \frac{1}{Q(q_v)\Delta}\,
   \calA\!\left(\prod_{i=1}^n
       \alpha_i^{\ell_i+(n+1-i)}
       (1- z_1 \alpha_i^{-1} q_v^{-\frac{1}{2}})
       (1- z_2 \alpha_i^{-1} q_v^{-\frac{1}{2}})\right);
\]
\item the triple $(z_1,z_2;Q(q_v))$ is given by
\[
(z_1,z_2;Q(q_v)) =
\begin{cases}
(\beta,\, \alpha_0\beta^{-1};\, 1-q_v^{-1}), & \text{if $T_\alpha$ is split},\\[6pt]
(\alpha_0^{\frac{1}{2}},\, -\alpha_0^{\frac{1}{2}};\, 1+q_v^{-1}), & \text{if $T_\alpha$ is non-split}.
\end{cases}
\]
\end{itemize}
The formulas \eqref{eq:A7-BFF-before-nonsplit} and \eqref{eq:A7-BFF-before-split} are simplified as
\begin{multline}\label{eq:A7-BFF} 
\sum_{\substack{\delta = (\ell_1,\ldots,\ell_n) \\ \ell_1 \geq \cdots \geq \ell_{n-1} \geq \ell_n = 0}}  q_v^{-(s+\frac{1}{2})\sum_{i=1}^n \ell_i} X_\delta^{\GL_n}(t_{\pi,v}) S_\delta(t_{\pi,v},t_{\lambda,v})\\
+ Q(q_v) \sum_{\substack{\delta = (\ell_1,\ldots,\ell_n) \\ \ell_1 \geq \cdots \geq \ell_{n-1} \geq \ell_n > 0}} q_v^{-(s+\frac{1}{2})\sum_{i=1}^n \ell_i} X_\delta^{\GL_n}(t_{\pi,v}) S_\delta(t_{\pi,v},t_{\lambda,v}).
\end{multline}
Let the Satake parameter $t_{\sigma,v}$ be
$$t_{\sigma,v} = \diag(\gamma_1,\ldots,\gamma_n) \in \GL_n(\C).$$
The Weyl group of $\GL_n$, which we denote by $W_{\GL_n}$, acts on $\C[\gamma_1,\ldots,\gamma_n]$. Similar to the definition of the alternator $\calA$ in $\C[W]$, we let $\calB = \sum_{w \in W_{\GL_n}} (-1)^{\text{length}(w)}w$ be the alternator in the group algebra $\C[W_{\GL_n}]$. By Weyl's character formula for $\GL_n(\C)$,
\begin{align}
X_\delta(t_{\sigma,v}) = \Delta^{-1}_{\GL_n} \calB( \gamma_1^{\ell_1+n-1}\gamma_2^{\ell_2+n-2}\cdots \gamma_n^{\ell_n})\label{eq:Whittaker}
\end{align}
where $\Delta_{\GL_n} = \calB(\gamma_1^{n-1} \cdots \gamma_{n-1})$. To be consistent with the notation $\Delta_{\GL_n}$, we write $\Delta_{\GSp_{2n}}$ for $\Delta$ in the rest of this paper.
We regard $\calA$ and $\calB$ as commuting operators on
\[
\C[\alpha_0,\alpha_1^{\pm 1},\ldots,\alpha_n^{\pm 1}] \otimes \C[\gamma_1,\ldots,\gamma_n].
\]
Let $\mu_{\text{sim}}$ be the similitude character of $\GSp_{2n}(\C)$. For an element $g \in \GSp_{2n}(\C)$, let $\bar{g} = \mu_{\text{sim}}(g)^{-1/2}g \in \Sp_{2n}(\C)$. In particular,
$\mu_{\text{sim}}(t_{\pi,v}) = \alpha_0$ and
\[
\bar{t}_{\pi,v} = \diag(\alpha_0^{-\frac{1}{2}}\alpha_1,\ldots,\alpha_0^{-\frac{1}{2}}\alpha_n,\alpha_0^{\frac{1}{2}}\alpha_n^{-1},\ldots,\alpha_0^{\frac{1}{2}}\alpha_1^{-1}).
\]
According to \cite[(8.27) and (8.41)]{ACS24}, we have
\[
L(s+1/2,\pi_v \times \sigma_v) = L(2s+1,\sigma_v, \wedge^2 \otimes \chi_{0,v}) D(\alpha,\gamma;X)
\]
where $X = q_v^{-(s+\frac{1}{2})}$, $X_\delta^{\Sp_{2n}}$ is the character of the finite-dimensional representation of $\Sp_{2n}(\C)$ with respect to highest weight $\delta$, and
\begin{align}
D(\alpha,\gamma;X)
&= \sum_{\ell_1 \geq \cdots \geq \ell_n \geq 0} \alpha_0^{\frac{1}{2}\sum_{i=1}^n \ell_i} X_\delta^{\GL_n}(t_{\sigma,v}) X_{\delta}^{\rm Sp_{2n}}(\bar{t}_{\pi,v}) X^{(\ell_1 + \cdots + \ell_n)}.
\end{align}
By Weyl's character formula for $\Sp_{2n}(\C)$,
\begin{multline*}
X_\delta^{\Sp_{2n}}(\bar{t}_{\pi,v}) = \frac{\calA\left(\prod_{i=1}^n (\alpha_0^{-\frac{1}{2}}\alpha_i)^{\ell_i + n+1-i}\right)}{\calA\left(\prod_{i=1}^n (\alpha_0^{-\frac{1}{2}}\alpha_i)^{n+1-i}\right)}
= \alpha_0^{-\frac{1}{2}\sum_{i=1}^n \ell_i} \Delta_{\GSp_{2n}}^{-1} \calA\left(\prod_{i=1}^n \alpha_i^{\ell_i + n + 1 - i}\right).
\end{multline*}
Hence
\begin{align}
D(\alpha,\gamma;X) = \sum_{\ell_1 \geq \cdots \geq \ell_n \geq 0} \Delta_{\GL_n}^{-1} \Delta_{\GSp_{2n}}^{-1} \calA\calB\left(\alpha_1^n \cdots \alpha_n^1 \gamma_1^{n-1}\cdots \gamma_{n-1}\prod_{i=1}^n (\alpha_i\gamma_i)^{\ell_i}\right) X^{(\ell_1 + \cdots + \ell_n)}.\label{eq:D-alpha-gammaX}
\end{align}
Note that 
\begin{equation}
L(s+1,\sigma_v \times \lambda_v) = \prod_{i=1}^n \frac{1}{(1- z_1 \gamma_i  q_v^{-\frac{1}{2}}X)(1- z_2 \gamma_i q_v^{-\frac{1}{2}}X)}.
\end{equation}
In order to prove \eqref{eq:appendix-main}, it suffices to show that
\begin{align}
D&(\alpha,\gamma;X) \prod_{i=1}^n (1- z_1\gamma_i q_v^{-\frac{1}{2}}X)(1- z_2\gamma_i q_v^{-\frac{1}{2}}X) = \label{eq:A8-BFF-nonsplit}\\
\sum_{\substack{\ell_1 \geq \cdots \geq \ell_n\\ \ell_n = 0}} & \Delta^{-1}_{\GL_n} \Delta^{-1}_{\GSp_{2n}} X^{\ell_1 + \cdots + \ell_n}\nonumber\\
\times \calA\calB&\left(\alpha_1^n \alpha_2^{n-1} \cdots \alpha_n \gamma_1^{n-1}\gamma_2^{n-2}\cdots\gamma_{n-1}\prod_{i=1}^{n-1}(\alpha_i \gamma_i)^{\ell_i}(1- z_1 \alpha_i^{-1} q^{-\frac{1}{2}})(1- z_2 \alpha_i^{-1} q^{-\frac{1}{2}})\right)\nonumber\\
+ \sum_{\substack{\ell_1 \geq \cdots \geq \ell_n\\ \ell_n > 0}}&\Delta^{-1}_{\GL_n} \Delta^{-1}_{\GSp_{2n}} X^{\ell_1 + \cdots + \ell_n}\nonumber\\
\times \calA\calB&\left(\alpha_1^n \alpha_2^{n-1} \cdots \alpha_n \gamma_1^{n-1}\gamma_2^{n-2}\cdots\gamma_{n-1}\prod_{i=1}^{n}(\alpha_i \gamma_i)^{\ell_i}(1- z_1 \alpha_i^{-1} q^{-\frac{1}{2}})(1- z_2 \alpha_i^{-1} q^{-\frac{1}{2}})\right).\nonumber
\end{align}
Here we note that the right-hand side of \eqref{eq:A8-BFF-nonsplit} equals \eqref{eq:A7-BFF} from the above discussion. We first show
\begin{multline}\label{eq:BFG1}
D(\alpha,\gamma;X)\prod_{i=1}^n (1 - z_1 \gamma_i q^{-\frac{1}{2}}X) = \sum_{\ell_1 \geq \cdots \geq \ell_n \geq 0} \Delta^{-1}_{\GL_n} \Delta^{-1}_{\GSp_{2n}} X^{\ell_1 + \cdots + \ell_n}\\
\times \calA\calB\left(\alpha_1^n \alpha_2^{n-1} \cdots \alpha_n \gamma_1^{n-1}\gamma_2^{n-2}\cdots\gamma_{n-1}\prod_{i=1}^{n}(\alpha_i \gamma_i)^{\ell_i}(1- z_1 \alpha_i^{-1} q^{-\frac{1}{2}})\right).
\end{multline}
First note that 
\(
\prod_{i=1}^n (1- z_1 \gamma_i q_v^{-\frac{1}{2}}) = \sum_{S \subseteq \NN{n}} \prod_{i=1}^n (- z_1 \gamma_i q_v^{-\frac{1}{2}})^{\chi_S(i)},
\)
where $\chi_S$ is the characteristic function of the set $S$. The left-hand side of \eqref{eq:BFG1} is then
\begin{multline*}
\sum_{S \subseteq \NN{n}} \sum_{\ell_1 \geq \cdots \geq \ell_n \geq 0} X^{\ell_1+ \cdots +\ell_n} \\
\times \calA \calB \left(\AAA\GGG \prod_{i=1}^n (\alpha_i \gamma_i)^{\ell_i} \prod_{i=1}^n (-z_1\gamma_i q_v^{-\frac{1}{2}}X)^{\chi_S(i)}\right).
\end{multline*}
By a change of variables $\ell_i$ to $\ell_i - \chi_S(i)$, we obtain
\begin{multline}\label{eq:before-claim}
\sum_{S \subseteq \NN{n}} \sum_{\ell_1-\chi_S(1) \geq \cdots \geq \ell_n-\chi_S(n) \geq 0} X^{\ell_1+ \cdots +\ell_n}\\
\times \calA \calB \left(\AAA\GGG \prod_{i=1}^n (\alpha_i \gamma_i)^{\ell_i} \prod_{i=1}^n (- z_1 \alpha_i^{-1} q_v^{-\frac{1}{2}})^{\chi_S(i)}\right).
\end{multline}
\begin{claim}
The condition $\ell_1 -\chi_S(1) \geq \cdots \geq \ell_n-\chi_S(n) \geq 0$ in \eqref{eq:before-claim} can be replaced by $\ell_1 \geq \cdots \geq \ell_n \geq 0$ without changing its value.
\end{claim}
\begin{proof}[Proof of Claim]
We first replace $\ell_i - \chi_S(i) \geq \ell_{i+1} - \chi_S(i+1)$ for $i = 1,2,\ldots,{n-1}$. 
If $i,i+1 \in S$ or $i,i+1 \notin S$, it is obvious that the condition $\ell_i - \chi_S(i) \geq \ell_{i+1} - \chi_S(i+1)$ is directly equivalent to $\ell_i \geq \ell_{i+1}$. If $i \in S$ and $i + 1 \notin S$, the condition is  $\ell_i >\ell_{i+1}$. However, when $\ell_i = \ell_{i+1}$, the corresponding summand vanishes since the exponents of $\alpha_i$ and $\alpha_{i+1}$ in the argument of $\calA\calB$ are equal. Similarly, when $i \notin S$ and $i+1 \in S$, the condition is $\ell_i \geq \ell_{i+1} -1$, and the corresponding summand for $\ell_i = \ell_{i+1} - 1$ vanishes since the exponents of $\gamma_i$ and $\gamma_{i+1}$ in the argument of $\calA\calB$ are equal.

Now we replace $\ell_n \geq \chi_S(n)$ by $\ell_n \geq 0$. Indeed, we notice that when $n \in S$ and $\ell_n = 0$, the argument of $\calA\calB$ is independent of $\alpha_n$.
\end{proof}

By \eqref{eq:BFG1}, the left-hand side of \eqref{eq:A8-BFF-nonsplit} equals
\begin{multline*}
\prod_{i=1}^n (1 -  z_2 \gamma_i q^{-\frac{1}{2}}X )\sum_{\ell_1 \geq \cdots \geq \ell_n \geq 0} \Delta^{-1}_{\GL_n} \Delta^{-1}_{\GSp_{2n}} X^{\ell_1 + \cdots + \ell_n}\\
\times \calA\calB\left(\alpha_1^n \alpha_2^{n-1} \cdots \alpha_n \gamma_1^{n-1}\gamma_2^{n-2}\cdots\gamma_{n-1}\prod_{i=1}^{n}(\alpha_i \gamma_i)^{\ell_i}(1- z_1 \alpha_i^{-1} q_v^{-\frac{1}{2}})\right).
\end{multline*}
Next, we employ an argument analogous to that used in the proof of \eqref{eq:BFG1}, and obtain
\begin{multline*}
\sum_{S \subseteq{\NN{n}}} \sum_{\ell_1 \geq \cdots \geq \ell_n \geq \chi_S(n)} \Delta^{-1}_{\GL_n} \Delta^{-1}_{\GSp_{2n}} X^{\ell_1 + \cdots + \ell_n}\\
\times \calA\calB\left(\alpha_1^n \alpha_2^{n-1} \cdots \alpha_n \gamma_1^{n-1}\gamma_2^{n-2}\cdots\gamma_{n-1}\prod_{i=1}^{n}(\alpha_i \gamma_i)^{\ell_i}(1- z_1 \alpha_i^{-1} q_v^{-\frac{1}{2}}) \prod_{i=1}^n (-z_2 \gamma_i q_v^{-\frac{1}{2}}X)^{\chi_S(i)}\right)
\end{multline*}
with no difficulty. This equals the right-hand side of \eqref{eq:A8-BFF-nonsplit},
which completes the proof of Theorem \ref{thm:appendix-main}.
\end{proof}

We actually proved a formal identity that \eqref{eq:A7-BFF} equals
\[
\frac{L(s+1/2,\pi_v \times \sigma_v)}{L(s+1,\sigma_v \times \lambda_v)L(2s+1,\sigma_v,\wedge^2 \otimes \chi_{0,v})}.
\]
Following a similar argument to that of \cite[Corollary 2.1]{Kap12}, where the technique of Ginzburg's \cite{Gin90} for reducing Satake parameter is applied, we have
\begin{corollary}
Let $F$ be a non-archimedean local field of characteristic zero. Let $1 \leq l < n$ and denote $\sigma^\prime$ (resp. $\pi$, $\lambda$) an unramified principal series representation of $\GL_l(F)$ (resp. ($\GSpin_{2n+1}(F)$, $\GSpin_2(F)$) with Satake parameter $t_{\sigma^\prime}$ (resp. $t_\pi$, $t_\lambda$). The following identity holds:
\begin{equation*}
\sum_{\substack{\ell_1 \geq \cdots \geq \ell_l \geq 0}}  X_\delta^{\GL_l}(t_{\sigma^\prime}) S_\delta(t_\pi,t_\lambda) q^{-(s+\frac{1}{2})\sum_{i=1}^{l} \ell_i} = \frac{L(s+1/2,\pi \times {\sigma^\prime})}{L(s+1,{\sigma^\prime} \times \lambda)L(2s+1,{\sigma^\prime},\wedge^2 \otimes \chi_0)}.
\end{equation*}
Here $\GSpin_2(F)$ is split or quasi-split non-split.\qed
\end{corollary}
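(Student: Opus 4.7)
The plan is to derive this corollary from Theorem \ref{thm:appendix-main} (the $l=n$ case) by specializing the Satake parameter of the $\GL_n$-representation, following the technique of Ginzburg \cite{Gin90} as implemented in \cite[Corollary 2.1]{Kap12}. The proof of Theorem \ref{thm:appendix-main} established not merely the equality of the local zeta integral with a quotient of $L$-factors, but the stronger formal identity between the bilateral sum \eqref{eq:A7-BFF} and $L(s+1/2,\pi_v \times \sigma_v)/\bigl(L(s+1,\sigma_v \times \lambda_v)L(2s+1,\sigma_v,\wedge^2 \otimes \chi_{0,v})\bigr)$, valid for every unramified principal series $\sigma$ of $\GL_n(F)$ with Satake parameter $(\gamma_1,\ldots,\gamma_n)$. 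Both sides are rational functions of the $\gamma_i$, so the identity persists under specialization.

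First I would specialize: given the unramified principal series $\sigma^\prime$ of $\GL_l(F)$ with Satake parameter $(\gamma_1,\ldots,\gamma_l)$, I would set $(\gamma_{l+1},\ldots,\gamma_n)$ to an explicit arithmetic progression in integer powers of $q$, chosen so that $\sigma$ becomes the parabolic induction $\sigma^\prime \boxplus \tau$ for a specific segment representation $\tau$ of $\GL_{n-l}(F)$. On the LHS of \eqref{eq:A7-BFF} the bialternant form of the Casselman--Shalika / Weyl character formula for $\GL_n$ forces the vanishing $X_\delta^{\GL_n}(t_\sigma) = 0$ unless $\ell_{l+1} = \cdots = \ell_n = 0$, and on such dominant vectors it reduces to $X_{(\ell_1,\ldots,\ell_l)}^{\GL_l}(t_{\sigma^\prime})$ multiplied by an explicit product of factors of the form $(1 - \gamma_i q^{c})$. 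In particular the $\ell_n > 0$ sum (the one carrying the $Q(q_v)$ prefactor) vanishes identically, which explains why no $Q(q_v)$ appears in the corollary.

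On the RHS I would factor each $L$-function along $\sigma = \sigma^\prime \boxplus \tau$ by multiplicativity,
\[
L(s+\tfrac{1}{2},\pi\times\sigma) = L(s+\tfrac{1}{2},\pi\times\sigma^\prime)\cdot L(s+\tfrac{1}{2},\pi\times\tau),
\]
\[
L(s+1,\sigma\times\lambda) = L(s+1,\sigma^\prime\times\lambda)\cdot L(s+1,\tau\times\lambda),
\]
and, using $\wedge^2(\sigma^\prime \boxplus \tau) \cong \wedge^2\sigma^\prime \oplus (\sigma^\prime\otimes\tau) \oplus \wedge^2\tau$,
\[
L(2s+1,\sigma,\wedge^2\otimes\chi_0) = L(2s+1,\sigma^\prime,\wedge^2\otimes\chi_0)\cdot L(2s+1,\sigma^\prime\otimes\tau\otimes\chi_0)\cdot L(2s+1,\tau,\wedge^2\otimes\chi_0).
\]
The outcome is that the specialized identity takes the form of the desired corollary multiplied by a ratio of ``$\tau$-extra'' factors on each side, and the main step is to match these two prefactors.

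The principal obstacle, and the step demanding most care, is precisely this bookkeeping: choosing the arithmetic progression for $(\gamma_{l+1},\ldots,\gamma_n)$ that makes $\tau$ the right segment for which the explicit product arising on the LHS (from the Jacobi--Trudi specialization of $X_\delta^{\GL_n}$) coincides with $L(s+\tfrac{1}{2},\pi\times\tau) / \bigl(L(s+1,\tau\times\lambda)\,L(2s+1,\sigma^\prime\otimes\tau\otimes\chi_0)\,L(2s+1,\tau,\wedge^2\otimes\chi_0)\bigr)$. This reduces to an elementary but notation-heavy identity between products of $(1 - q^a X)$-type factors, structurally identical to the calculation in \cite[Corollary 2.1]{Kap12}, but requiring that the extra similitude parameter $\alpha_0$ (absent in the $\SO_{2n+1}$ setting) be tracked consistently through $S_\delta$ and through the $\chi_0$-twist in the exterior-square factor. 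Once this cancellation is verified, the corollary follows as a formal identity in $X = q^{-(s+1/2)}$.
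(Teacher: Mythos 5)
Your overall strategy --- deduce the case $l<n$ from the formal identity \eqref{eq:A8-BFF-nonsplit} proved for $l=n$ by specializing the Satake parameter of the $\GL_n$-factor --- is the intended one (the paper simply invokes Ginzburg's reduction of Satake parameters as implemented in \cite[Corollary 2.1]{Kap12}). However, the specialization you choose does not do what you claim. If $(\gamma_{l+1},\ldots,\gamma_n)$ are set to \emph{nonzero} constants (integer powers of $q$ forming a segment), the Schur polynomial $X_\delta^{\GL_n}(t_\sigma)$ does \emph{not} vanish when $\ell_{l+1}>0$: for instance with $n=2$, $l=1$, $\delta=(1,1)$ and $\gamma_2=q^{a}$ one gets $X_{(1,1)}^{\GL_2}(\diag(\gamma_1,q^{a}))=\gamma_1q^{a}\neq 0$. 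A bialternant vanishes only when two columns of the numerator determinant coincide, and no segment specialization forces this for precisely the $\delta$ having more than $l$ nonzero parts. Consequently the $\ell_n>0$ block of \eqref{eq:A7-BFF} carrying $Q(q_v)$ does not drop out, the left-hand side does not collapse to the $\GL_l$ sum, and the multiplicativity bookkeeping you describe cannot be completed. This is the gap.

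The specialization that actually works is the degenerate one, $\gamma_{l+1}=\cdots=\gamma_n=0$. It is legitimate because, after expanding both sides of \eqref{eq:A8-BFF-nonsplit} as formal power series in $X=q^{-(s+\frac{1}{2})}$, the coefficient of each power of $X$ is a polynomial in $\gamma_1,\ldots,\gamma_n$ (the factors $S_\delta$ do not involve the $\gamma_i$, and the reciprocals of the three $L$-factors are polynomials in the $\gamma_i$ and $X$), so an identity valid on a nonempty open set of parameters persists at $\gamma_j=0$. The stability property of Schur polynomials --- $X_\delta^{\GL_n}$ evaluated with last entry $0$ equals $X_\delta^{\GL_{n-1}}$ if $\ell_n=0$ and equals $0$ if $\ell_n>0$ --- then kills every term with $\ell_{l+1}>0$ (in particular the entire $Q(q_v)$-block) and reduces $X_\delta^{\GL_n}(t_{\sigma})$ to $X_\delta^{\GL_l}(t_{\sigma'})$, while on the right-hand side every Euler factor involving one of $\gamma_{l+1},\ldots,\gamma_n$ becomes $1$, yielding directly the $L$-factors attached to $\sigma'$. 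With this corrected specialization no decomposition $\sigma=\sigma'\boxplus\tau$, no matching of ``$\tau$-extra'' factors, and no separate tracking of $\alpha_0$ through a segment computation is required.
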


\end{document}